\numberwithin{equation}{section}
\newtheorem{theorem}{\bf Theorem}[section]
\newtheorem{lem}{\bf Lemma}[section]
\newtheorem{cor}{\bf Corollary}[section]
\newtheorem{remark}{\bf Remark}[section]
\newcommand\dv{\mathrm{div}}
\newcommand\tr{\mathrm{tr}}
\newcommand\rc{\mathrm{Ric}}
\newcommand\dm{\mathrm{diam}}
\begin{document}

\title[Estimates for the first eigenvalues of Bi-drifted Laplacian]{Estimates for the first eigenvalues of Bi-drifted Laplacian on smooth metric measure space}
\author[Marcio C. Araújo Filho]{Marcio Costa Araújo Filho$^1$}
\address{$^1$Departamento de Matemática, Universidade Federal de Rondônia, Campus Ji-Paraná, R. Rio Amazonas, 351, Jardim dos Migrantes, 76900-726 Ji-Paraná, Rondônia, Brazil}
\email{$^1$marcio.araujo@unir.br}
\keywords{Eigenvalue problems on manifolds, $m$-Bakry-Emery Ricci, Bi-drifted Laplacian}
\subjclass[2010]{Primary: 35P15; Secondary: 58C35, 58C40.}

\begin{abstract}
In this paper we obtain lower bounds for the first eigenvalue to some kinds of the eigenvalue problems for Bi-drifted Laplacian operator on compact manifolds (also called a smooth metric measure space) with boundary and $m$-Bakry-Emery Ricci curvature or Bakry-Emery Ricci curvature bounded below. We also address the eigenvalue problem with Wentzell-type boundary condition for drifted Laplacian on smooth metric measure space.
\end{abstract}
\maketitle

\section{Introduction}
Let $(M, \langle , \rangle)$ be an $n$-dimensional compact Riemannian manifold with boundary. For a given  function $\phi \in C^2(M)$ the triple $(M, \langle , \rangle, e^{-\phi}dv)$ is customarily called a smooth metric measure space or manifolds with density, where $dv$ is the Riemannian volume measure on $M$. On $(M, \langle , \rangle, e^{-\phi}dv)$ we defined the drifted Laplacian operator (also called $\phi$-Laplacian or Witten– Laplacian) as follows 
\begin{equation*}
     \Delta_\phi = \Delta  -  \langle \nabla \phi,\nabla \rangle,
\end{equation*} 
where $\Delta$ and $\nabla$ are the Laplacian and the gradient operator on $M$, respectively. Moreover, let us consider also the  Bi-drifted Laplacian $ \Delta_\phi^2 =  \Delta_\phi ( \Delta_\phi )$.

In this paper, we are interested in obtaining estimates for the first eigenvalue of problems for the Bi-drifted Laplacian operator on smooth metric measure space with $m$-Bakry-Emery Ricci curvature or Bakry-Emery-Ricci curvature bounded below. In this direction, interesting estimates have been obtained recently in \cite{DuBezerra}, \cite{HuangMa}, \cite{LiWei}, \cite{WeiWylie}, \cite{TuHuang} and others. We also address an eigenvalue problem with the Wentzell-type boundary condition for drifted Laplacian on smooth metric measure space, see Theorem~\eqref{wentzell-theorem} and Theorem~\eqref{steklov-theorem}.  

On $(M, \langle , \rangle, e^{-\phi}dv)$, the $m$-dimensional Bakry-Emery Ricci curvature or $m$-Bakry-Emery Ricci curvature is defined by
\begin{equation*}
\rc_\phi^m = \rc + \nabla^2\phi - \frac{1}{m-n}\nabla\phi \otimes \nabla\phi,
\end{equation*}
where $m\geq n$ is a constant, $\rc$ is the Ricci curvature on $M$, $\nabla^2$ be the Hessian operator and $m=n$ if and only if $\phi$ is a constant (see  \cite{BakryEmery}, \cite{HuangMa} and \cite{LiWei}). Let us define 
\begin{equation*}
    \rc_\phi = \rc + \nabla^2 \phi,
\end{equation*}
so that $\rc_\phi$ can be seen as the $\infty$-dimensional Bakry-Emery Ricci curvature also called Bakry-Emery Ricci curvature. It is worth mentioning that $\rc_\phi^m$  plays as a interesting substitute of the Ricci curvature for establishing many important results in differential geometry, see \cite{BezerraXia}, \cite{DuBezerra}, \cite{HuangMa}, \cite{TuHuang} and references therein. For example, using $\rc_\phi^m$, Qian~\cite{Qian} obtained a generalization of the well-known Myer's theorem. In fact, it showed that if $\rc_\phi^m\geq (m-1)c>0$ then M should be compact with a limited diameter, that is, $\dm (M)\leq \frac{\pi}{\sqrt{c}}$.

In recent years, the Ricci Flow has been the interest of many mathematicians and the equation $\rc_\phi = \lambda\langle , \rangle$ for some constant $\lambda$ appears as the gradient Ricci soliton equation playing a key role in this study(see \cite{Cao}). The gradient Ricci soliton is classified according to the  sign of $\lambda$, that is, $(M, \langle , \rangle,e^{-\phi}dv, \lambda)$ is called steady for $\lambda = 0$, shrinking for $\lambda > 0$ and expanding for $\lambda<0$.
Furthermore, when $m>n$, the equation $\rc_\phi^m = \kappa\langle , \rangle$ for some constant $\kappa$ is a generalized $\overline{m}$-quasi-Einstein metric equation, where $\overline{m}=m-n>0$ (see, for example, \cite[Definition~1]{BarrosRibeiro} and \cite{BarrosGomes}). An interesting motivation for the $m$-Bakry-Emery Ricci curvature was given by Wei and Wylie~\cite[Section~2]{WeiWylie}. 

Our initial results concern the problems of Blucking and clampled plate for Bi-drifted Laplacian on smooth metric measure space $(M, \langle , \rangle, e^{-\phi}dv)$, that is,
\begin{equation}\label{clampledproblem}
    \left \{ \begin{array}{ll}\Delta_\phi^2 u =\Gamma u & \mbox{in}\quad M,  \\
    u = \frac{\partial u}{\partial {\nu}}= 0 & \mbox{on}\quad \partial M, \end{array} \right.
\end{equation}
\begin{equation}\label{buckling}
    \left \{ \begin{array}{ll} \Delta_\phi^2 u = -\Lambda \Delta_\phi u & \mbox{in} \quad M,  \\
    u =\frac{\partial u}{\partial \nu} = 0 & \mbox{on} \quad \partial M. \end{array} \right.
\end{equation}
When $\phi$ is a constant, many important results  have been obtained for Problems~\eqref{clampledproblem} and \eqref{buckling}, see \cite{ashb2}, \cite{ashb3}, \cite{ashb4}, \cite{ccwx}, etc. When $\phi$ is not necessary constant, the eigenvalues of problems \eqref{clampledproblem} and \eqref{buckling} have been studied in many papers, see  \cite{BezerraXia}, \cite{DuBezerra} and references therein. In fact, we get the following two results.
\begin{theorem}\label{clampledteo}
Let $(M, \langle , \rangle, e^{-\phi}dv)$ be an $n(\geq 2)$-dimensional compact connected smooth measure space with boundary $\partial M$ and denote by $\nu$ the outward unit normal vector field of $\partial M$. Let $\lambda_1$ be the first Dirichlet eigenvalue of the drifted Laplacian of $M$ and let $\Gamma_1$ be the first eigenvalue of Problem~\eqref{clampledproblem}. 
\begin{enumerate}
\item \label{thm1-1} Assume that  the Bakry-Emery Ricci curvature of $M$ is bounded below by $\frac{|\nabla \phi|^2}{na}+b$, 
for some positive constants $a$ and $b$. Then,
\begin{align*}
    \Gamma_1 > \lambda_1\Big(\frac{\lambda_1}{n(a+1)} + b \Big).
\end{align*}
\item \label{thm1-2} Assume that the $m$-Bakry-Emery Ricci curvature of $M$ is bounded below by $(m-1)k \geq 0$. Then,
\begin{align*}
    \Gamma_1 > \lambda_1\Big(\frac{\lambda_1}{m}+(m-1)k\Big).
\end{align*}
\end{enumerate}
\end{theorem}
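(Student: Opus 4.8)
The plan is to test the variational characterization of $\Gamma_1$ against a first eigenfunction and to reduce the whole estimate to the weighted Bochner formula, a weighted integration by parts, and the Rayleigh quotient for $\lambda_1$. Let $u$ be a first eigenfunction of Problem~\eqref{clampledproblem}, so that $\Delta_\phi^2 u=\Gamma_1 u$, $u=\partial u/\partial\nu=0$ on $\partial M$, and $\Gamma_1=\big(\int_M(\Delta_\phi u)^2 e^{-\phi}dv\big)\big/\big(\int_M u^2 e^{-\phi}dv\big)$. First I would integrate the weighted Bochner formula $\tfrac12\Delta_\phi|\nabla u|^2=|\nabla^2 u|^2+\langle\nabla u,\nabla\Delta_\phi u\rangle+\rc_\phi(\nabla u,\nabla u)$ against $e^{-\phi}dv$. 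Because $u=0$ and $\partial u/\partial\nu=0$ on $\partial M$, the full gradient $\nabla u$ vanishes on $\partial M$, so the boundary term produced by the weighted divergence theorem on the left vanishes; integrating the middle term by parts gives $\int_M\langle\nabla u,\nabla\Delta_\phi u\rangle e^{-\phi}dv=-\int_M(\Delta_\phi u)^2 e^{-\phi}dv$, the boundary contribution again dying since $\partial u/\partial\nu=0$. This produces the key identity
\[
\int_M(\Delta_\phi u)^2 e^{-\phi}dv=\int_M|\nabla^2 u|^2 e^{-\phi}dv+\int_M\rc_\phi(\nabla u,\nabla u)\,e^{-\phi}dv .
\]

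Next I would record the auxiliary chain $\int_M(\Delta_\phi u)^2\,e^{-\phi}dv\ge\lambda_1\int_M|\nabla u|^2 e^{-\phi}dv\ge\lambda_1^2\int_M u^2 e^{-\phi}dv$. The second inequality is simply the Dirichlet Rayleigh quotient for $\lambda_1$ applied to $u$, which is admissible since $u=0$ on $\partial M$. For the first, integration by parts gives $\int_M|\nabla u|^2 e^{-\phi}dv=-\int_M u\,\Delta_\phi u\,e^{-\phi}dv$, whence Cauchy--Schwarz yields $\big(\int_M|\nabla u|^2\big)^2\le\int_M u^2\cdot\int_M(\Delta_\phi u)^2$; combining this with $\int_M|\nabla u|^2\ge\lambda_1\int_M u^2$ and cancelling one factor of $\int_M|\nabla u|^2$ gives $\int_M(\Delta_\phi u)^2\ge\lambda_1\int_M|\nabla u|^2$.

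The computational core is a pointwise algebraic estimate for the Hessian term, done separately under each hypothesis, both starting from $|\nabla^2 u|^2\ge\tfrac1n(\Delta u)^2$ with $\Delta u=\Delta_\phi u+\langle\nabla\phi,\nabla u\rangle$. For part~\eqref{thm1-2} I would split $\rc_\phi=\rc_\phi^m+\tfrac{1}{m-n}\nabla\phi\otimes\nabla\phi$ and complete the square to obtain $|\nabla^2 u|^2+\tfrac{1}{m-n}\langle\nabla\phi,\nabla u\rangle^2\ge\tfrac1m(\Delta_\phi u)^2$, the difference being $\tfrac{m-n}{nm}\big(\Delta_\phi u+\tfrac{m}{m-n}\langle\nabla\phi,\nabla u\rangle\big)^2\ge0$ (valid for $m>n$; the case $m=n$ forces $\phi$ constant and is classical). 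For part~\eqref{thm1-1} I would use $\langle\nabla\phi,\nabla u\rangle^2\le|\nabla\phi|^2|\nabla u|^2$ together with an analogous completion of the square to get $|\nabla^2 u|^2+\tfrac{1}{na}|\nabla\phi|^2|\nabla u|^2\ge\tfrac{1}{n(a+1)}(\Delta_\phi u)^2$.

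It remains to assemble. For part~\eqref{thm1-2}, inserting the splitting of $\rc_\phi$, the Hessian estimate, and $\rc_\phi^m\ge(m-1)k$ into the key identity gives $\int_M(\Delta_\phi u)^2\ge\tfrac1m\int_M(\Delta_\phi u)^2+(m-1)k\int_M|\nabla u|^2$; bounding the first right-hand term below by $\tfrac{\lambda_1^2}{m}\int_M u^2$ via the auxiliary chain and the second by $(m-1)k\lambda_1\int_M u^2$ via $\int_M|\nabla u|^2\ge\lambda_1\int_M u^2$, then dividing by $\int_M u^2$, yields $\Gamma_1\ge\tfrac{\lambda_1^2}{m}+(m-1)k\lambda_1$. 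Part~\eqref{thm1-1} is identical with $\tfrac1m$ replaced by $\tfrac{1}{n(a+1)}$ and $(m-1)k$ by $b$, using $\rc_\phi\ge\tfrac{|\nabla\phi|^2}{na}+b$. I expect the main obstacles to be twofold: pinning down the two completing-the-square identities so that precisely the constants $\tfrac1m$ and $\tfrac{1}{n(a+1)}$ appear, and upgrading $\ge$ to the strict $>$ asserted in the statement. For strictness I would argue that equality throughout would force equality in the Cauchy--Schwarz and Rayleigh steps, hence force $u$ to be a first Dirichlet eigenfunction of $\Delta_\phi$; but such an eigenfunction has nowhere-vanishing normal derivative on $\partial M$ by the Hopf boundary lemma, contradicting $\partial u/\partial\nu\equiv0$, so the inequality is strict.
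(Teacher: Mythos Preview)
Your proposal is correct and follows essentially the same route as the paper: integrate the weighted Bochner/Reilly identity (boundary terms vanish since $\nabla u|_{\partial M}=0$), apply the pointwise Hessian lower bound $|\nabla^2u|^2\ge\frac1n(\Delta u)^2$ together with the same completing-the-square algebraic inequality to produce the constants $\frac1{n(a+1)}$ and $\frac1m$, and close with the Dirichlet Rayleigh chain $\int(\Delta_\phi u)^2\ge\lambda_1\int|\nabla u|^2\ge\lambda_1^2\int u^2$.

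The one genuine difference is the strictness argument. The paper obtains strictness from the \emph{Hessian} side: equality in the algebraic step forces $\Delta u+\tfrac1a\langle\nabla\phi,\nabla u\rangle=0$ (respectively $\Delta u+\tfrac{n}{m-n}\langle\nabla\phi,\nabla u\rangle=0$ when $m>n$), and integrating this against $u\,e^{c\phi}dv$ for the appropriate $c$ shows $u$ is constant, a contradiction. You instead obtain strictness from the \emph{Rayleigh} side: equality forces $u$ to be a first Dirichlet eigenfunction of $\Delta_\phi$, and then the Hopf boundary lemma (applied to the sign-definite first eigenfunction of the uniformly elliptic operator $\Delta_\phi$) contradicts $\partial u/\partial\nu\equiv0$. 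Your argument is a bit cleaner in that it does not require tracking the precise equality case of the completing-the-square inequality and works uniformly in both parts (including $m=n$, where the paper defers to \cite{wangxia4}); the paper's argument, on the other hand, avoids invoking simplicity/sign properties of the first Dirichlet eigenfunction and the Hopf lemma.
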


\begin{theorem}\label{bucklingteo}
Let $(M, \langle , \rangle, e^{-\phi}dv)$ be an $n(\geq 2)$-dimensional compact connected smooth measure space with boundary $\partial M$ and denote by $\nu$ the outward unit normal vector field of $\partial M$. Assume that the Bakry-Emery Ricci curvature of $M$ is bounded below by $\frac{|\nabla \phi|^2}{na}+b$, for some positive constants $a$ and $b$. Let $\lambda_1$ be the first Dirichlet eigenvalue of the drifted Laplacian of $M$. Then, the first eigenvalue of Problem~\eqref{buckling} satisfies
\begin{equation*}
    \Lambda_1 > \frac{\lambda_1}{n(a+1)} + b. 
\end{equation*}
\end{theorem}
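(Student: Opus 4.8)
The plan is to test the weighted Bochner identity against a first buckling eigenfunction and to bring in the Dirichlet eigenvalue $\lambda_1$ through a deliberately non‑circular use of the Rayleigh quotient. Let $u$ be a first eigenfunction of Problem~\eqref{buckling}, so $u=\frac{\partial u}{\partial\nu}=0$ on $\partial M$. Multiplying the equation by $u$, integrating against $e^{-\phi}dv$, and integrating by parts twice (the boundary terms vanish since $u=0$ and $\frac{\partial u}{\partial\nu}=0$ on $\partial M$, and $\Delta_\phi$ is self‑adjoint in $L^2(e^{-\phi}dv)$), I would first record the variational identity
\begin{equation*}
\int_M(\Delta_\phi u)^2\,e^{-\phi}dv=\Lambda_1\int_M|\nabla u|^2\,e^{-\phi}dv .
\end{equation*}

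Next I would apply the weighted Bochner formula
\begin{equation*}
\tfrac12\Delta_\phi|\nabla u|^2=|\nabla^2u|^2+\langle\nabla u,\nabla\Delta_\phi u\rangle+\rc_\phi(\nabla u,\nabla u)
\end{equation*}
and integrate it over $M$ against $e^{-\phi}dv$. Using $\int_M\Delta_\phi f\,e^{-\phi}dv=\int_{\partial M}\frac{\partial f}{\partial\nu}\,e^{-\phi}dA$ together with $\nabla u=0$ on $\partial M$ (the condition $u=0$ kills the tangential part and $\frac{\partial u}{\partial\nu}=0$ the normal part) makes the left‑hand boundary term vanish; integrating $\langle\nabla u,\nabla\Delta_\phi u\rangle$ by parts and invoking $\frac{\partial u}{\partial\nu}=0$ converts the middle term into $-\int_M(\Delta_\phi u)^2e^{-\phi}dv$. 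This yields
\begin{equation*}
\int_M(\Delta_\phi u)^2\,e^{-\phi}dv\ \ge\ \int_M|\nabla^2u|^2\,e^{-\phi}dv+\int_M\rc_\phi(\nabla u,\nabla u)\,e^{-\phi}dv .
\end{equation*}

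The pointwise core is then to combine $|\nabla^2u|^2\ge\frac{(\Delta u)^2}{n}$, the curvature hypothesis $\rc_\phi\ge\frac{|\nabla\phi|^2}{na}+b$, and the identity $\Delta u=\Delta_\phi u+\langle\nabla\phi,\nabla u\rangle$. Completing the square gives
\begin{equation*}
\frac{(\Delta u)^2}{n}+\frac{\langle\nabla\phi,\nabla u\rangle^2}{na}\ \ge\ \frac{(\Delta_\phi u)^2}{n(a+1)},
\end{equation*}
and since $\langle\nabla\phi,\nabla u\rangle^2\le|\nabla\phi|^2|\nabla u|^2$, the curvature term $\frac{|\nabla\phi|^2}{na}|\nabla u|^2$ is exactly what is needed to absorb the left side after integration, leaving
\begin{equation*}
\int_M(\Delta_\phi u)^2\,e^{-\phi}dv\ \ge\ \frac{1}{n(a+1)}\int_M(\Delta_\phi u)^2\,e^{-\phi}dv+b\int_M|\nabla u|^2\,e^{-\phi}dv .
\end{equation*}

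The decisive step, and the one I expect to be most delicate, is that one must \emph{not} substitute $\int_M(\Delta_\phi u)^2=\Lambda_1\int_M|\nabla u|^2$ into the surviving term on the right, as this only reproduces a bound independent of $\lambda_1$; instead I would use the auxiliary inequality
\begin{equation*}
\int_M(\Delta_\phi u)^2\,e^{-\phi}dv\ \ge\ \lambda_1\int_M|\nabla u|^2\,e^{-\phi}dv,
\end{equation*}
valid for every $u$ vanishing on $\partial M$ by $\int_M|\nabla u|^2=-\int_M u\,\Delta_\phi u$, Cauchy--Schwarz, and the Dirichlet characterization $\int_M u^2\le\lambda_1^{-1}\int_M|\nabla u|^2$. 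Feeding this lower bound into the right‑hand term and using the variational identity on the left gives $\Lambda_1\ge\frac{\lambda_1}{n(a+1)}+b$. For strictness, equality in the final chain would force equality in the auxiliary inequality, hence $\Delta_\phi u=-\lambda_1u$ with $u=\frac{\partial u}{\partial\nu}=0$ on $\partial M$; but by the Hopf lemma a first Dirichlet eigenfunction has nonvanishing normal derivative, so this forces $u\equiv0$, a contradiction, yielding the strict inequality $\Lambda_1>\frac{\lambda_1}{n(a+1)}+b$.
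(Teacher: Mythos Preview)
Your argument is correct and follows essentially the same route as the paper: the weighted Bochner identity (equivalently, Reilly's formula~\eqref{reillyequality} with all boundary terms vanishing), the pointwise estimate $|\nabla^2u|^2\ge\frac{(\Delta_\phi u)^2}{n(a+1)}-\frac{|\nabla\phi|^2|\nabla u|^2}{na}$, the curvature hypothesis, and the auxiliary bound $\int_M(\Delta_\phi u)^2\,d\mu\ge\lambda_1\int_M|\nabla u|^2\,d\mu$ are combined exactly as in the paper to reach $\Lambda_1\ge\frac{\lambda_1}{n(a+1)}+b$. The only genuine difference is the strictness step: the paper deduces from equality in the Hessian estimate that $\Delta u+\frac1a\langle\nabla\phi,\nabla u\rangle=0$, integrates against $u\,e^{\phi/a}dv$, and concludes $u$ is constant; you instead observe that equality forces $u$ to be a first Dirichlet eigenfunction and then invoke the Hopf lemma to contradict $\partial u/\partial\nu=0$. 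Both are valid; your route is slightly cleaner in that it avoids the auxiliary weighted integration, while the paper's route does not need to appeal to the sign of a first Dirichlet eigenfunction.
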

Similar results as in the previous theorems were made by Bezerra and Xia in \cite{BezerraXia} changing the boundary conditions.

Steklov in \cite{Stekloff} studied elliptic problems with parameters in the boundary conditions. From then on, these types of problems came to be called Steklov problems.
In this paper, we are also interested in the following Steklov problems on smooth metric measure space $(M, \langle , \rangle, e^{-\phi}dv)$:
\begin{equation}\label{volumproblem}
    \left \{ \begin{array}{ll}\Delta_\phi^2 u = 0 & \mbox{in} \quad M,  \\
    u = \overline{\Delta}_\phi u - p\frac{\partial u}{\partial \nu} = 0 & \mbox{on} \quad \partial M, \end{array} \right.
\end{equation}
\begin{equation}\label{problemxi1}
    \left \{ \begin{array}{ll} \Delta_\phi^2 u = 0 & \mbox{in} \quad M,  \\
    \frac{\partial u}{\partial \nu} = \frac{\partial (\Delta_\phi u)}{\partial \nu} + \xi u = 0 & \mbox{on} \quad \partial M, \end{array} \right.
\end{equation}
\begin{equation}\label{problemzeta1}
    \left \{ \begin{array}{ll} \Delta_\phi^2 u = 0 & \mbox{in} \quad M,  \\
    \frac{\partial u}{\partial \nu} = \frac{\partial (\Delta_\phi u)}{\partial \nu} + \beta \overline{\Delta}_\phi u + \zeta u = 0 & \mbox{on} \quad \partial M, \end{array} \right.
\end{equation}
and
\begin{equation}\label{problemtau1}
    \left \{ \begin{array}{ll}  \Delta_\phi^2 u = 0 & \mbox{in} \quad M,  \\
    \frac{\partial u}{\partial \nu} = \frac{\partial (\Delta_\phi u)}{\partial \nu} - \tau \overline{\Delta}_\phi u = 0 & \mbox{on} \quad \partial M, \end{array} \right.
\end{equation}
where $\overline{\Delta}_\phi$ is the drifted Laplacian on $\partial M$.

When $\phi=\mbox{constant}$ the Bi-drifted Laplacian becomes  the biharmonic operator and, in this case, Chen et al.  \cite{ccwx} obtained lower limits for the first eigenvalue of some kinds of eigenvalues problems on compact manifolds with boundary positive Ricci curvature. When $\phi$ is not necessarily constant, that is, for the Bi-drifted Laplacian case, Du and Bezerra~\cite{DuBezerra} obtained the results of Chen et al.~\cite{ccwx}. 
More recently Wang-Xia~\cite{wangxia4} achieved results similar to those found by Chen et al.~\cite{ccwx}, for the first eigenvalue of four kinds of eigenvalue problems of the biharmonic operator. In this paper, we generalize some  results found by Wang-Xia~\cite{wangxia4}, for the Bi-drifted Laplacian in an smooth metric measure space $(M, \langle , \rangle, e^{-\phi}dv)$ with $m$-Bakry-Emery Ricci curvature or Bakry-Emery Ricci curvature bounded inferiorly. 
 
Problem~\eqref{problemxi1} was first studied in \cite{kttlr3} by Kuttler and Sigillito when $\Delta_\phi = \Delta$ ($\phi=\mbox{constant}$) and $M=\Omega$ is a bounded domain in $\mathbb{R}^n$, in this case they gave some estimates for the first nonzero eigenvalue $\xi_1$. Also, Xia and Wang in \cite{wangxia3} proved an isoperimetric upper bound for $\xi_1$ when $M$ is a bounded domain in $\mathbb{R}^n$ and $\Delta_\phi = \Delta$. We can characterize $\xi_1$ using the Rayleigh-Ritz formula, that is
\begin{align}\label{eq-xi1}
    \xi_1=\min_{\substack{0\neq w \in H^2(M) \\
\int_{\partial M}wd\vartheta=0=w|_{\partial M}}}\frac{\int_M (\Delta_\phi w)^2d\mu}{\int_{\partial M}w^2d\vartheta},
\end{align}
where $d\mu=e^{-\phi}dv$ and  $d\vartheta=e^{-\phi}dA$ are the Riemannian volume measure on $M$ and $\partial M$, respectively, which will be the notation used throughout the paper.

The first nonzero eigenvalue of Problem~\eqref{problemzeta1} can be characterized as 
\begin{equation*}
    \zeta_{1, \beta}=\min_{\substack{0\neq w \in H^2(M) \\
\int_{\partial M}wd\vartheta=0=w|_{\partial M}}}\frac{\int_M (\Delta_\phi w)^2d\mu+\beta\int_{\partial M}|\overline{\nabla}w|^2d\vartheta}{\int_{\partial M}w^2d\vartheta}.
\end{equation*}

Let $\eta_1$ the first nonzero eigenvalue of the drifted Laplacian of $\partial M$. If $\beta >0$ and $\xi_1$ is the first nonzero eigenvalue of the Steklov problem \eqref{problemxi1} we have the following relationship
\begin{equation}\label{eq-zeta1-xi1}
    \zeta_{1, \beta} \geq \xi_1 + \beta \eta_1,
\end{equation}
with equality holding if and only if any eigenfunction $w$ corresponding to $\zeta_{1, \beta}$ is an eigenfunction corresponding to $\xi_1$ and $w|_{\partial M}$ is an eigenfunction corresponding to $\eta_{1, \beta}$. We have the following result.
\begin{theorem}\label{zeta1estimateteo}
Let $(M, \langle , \rangle, e^{-\phi}dv)$ be an $n$-dimensional compact smooth measure space with boundary $\partial M$ and the $m$-Bakry-Emery Ricci curvature of $M$ is bounded below by $-(m-1)k$ for some constant $k \geq 0$. Assume that the principal curvatures of $\partial M$ are bounded below by a positive constant $c$ and denote  by $\zeta_{1, \beta}$ be the first eigenvalue of Problem~\eqref{problemzeta1}. Then,
\begin{equation*}\label{zeta1estimate}
    \zeta_{1, \beta} > \frac{mc\eta_1\mu_1}{(m-1)(\mu_1 +mk)} + \beta\eta_1,
\end{equation*}
where $\mu_1$ and $\eta_1$ are the first nonzero Neumann eigenvalue of the drifted Laplacian of $M$ and the first nonzero eigenvalue of the drifted Laplacian of $\partial M$, respectively.
\end{theorem}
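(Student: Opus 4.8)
The plan is to reduce the estimate for $\zeta_{1,\beta}$ to a lower bound for $\xi_1$ via the relation \eqref{eq-zeta1-xi1} and to establish the latter with the weighted Reilly formula. Since \eqref{eq-zeta1-xi1} gives $\zeta_{1,\beta}\geq \xi_1+\beta\eta_1$, it suffices to prove the strict inequality $\xi_1>\frac{mc\eta_1\mu_1}{(m-1)(\mu_1+mk)}$. Let $u$ be an eigenfunction associated with $\xi_1$, so that $\Delta_\phi^2 u=0$ in $M$, while $\frac{\partial u}{\partial\nu}=0$ and $\int_{\partial M}u\,d\vartheta=0$ on $\partial M$, and write $z=u|_{\partial M}$, $A=\int_M(\Delta_\phi u)^2\,d\mu$, $B=\int_M|\nabla u|^2\,d\mu$, $C=\int_{\partial M}u^2\,d\vartheta$, so that $\xi_1=A/C$. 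The first step is to apply the weighted Reilly formula to $u$: because $\frac{\partial u}{\partial\nu}=0$, every boundary term carrying a normal derivative drops out and only the second fundamental form term survives, giving
\begin{equation*}
A=\int_M\Big(|\nabla^2 u|^2+\rc_\phi(\nabla u,\nabla u)\Big)\,d\mu+\int_{\partial M}\mathrm{II}(\overline{\nabla}z,\overline{\nabla}z)\,d\vartheta.
\end{equation*}

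Next I would convert the interior integrand into the $m$-Bakry--Emery quantity. Using the splitting $\rc_\phi=\rc_\phi^m+\frac{1}{m-n}\nabla\phi\otimes\nabla\phi$ together with the pointwise inequality $|\nabla^2 u|^2+\frac{1}{m-n}\langle\nabla\phi,\nabla u\rangle^2\geq\frac{1}{m}(\Delta_\phi u)^2$ — which follows by minimizing $\frac{1}{n}(\Delta_\phi u+\langle\nabla\phi,\nabla u\rangle)^2+\frac{1}{m-n}\langle\nabla\phi,\nabla u\rangle^2$ after the trace Cauchy--Schwarz bound $|\nabla^2 u|^2\geq\frac{1}{n}(\Delta u)^2$ — and then invoking the hypotheses $\rc_\phi^m\geq-(m-1)k$ and $\mathrm{II}(\overline{\nabla}z,\overline{\nabla}z)\geq c\,|\overline{\nabla}z|^2$, I obtain
\begin{equation*}
\frac{m-1}{m}A+(m-1)k\,B\ \geq\ c\int_{\partial M}|\overline{\nabla}z|^2\,d\vartheta.
\end{equation*}

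The decisive step is to control $B$ by $A$ through the Neumann eigenvalue. Since $\frac{\partial u}{\partial\nu}=0$ we have $B=-\int_M u\,\Delta_\phi u\,d\mu$, and since $\int_M\Delta_\phi u\,d\mu=\int_{\partial M}\frac{\partial u}{\partial\nu}\,d\vartheta=0$ the $d\mu$-average of $u$ may be subtracted: writing $u_0$ for the mean-zero part of $u$ gives $B=-\int_M u_0\,\Delta_\phi u\,d\mu$. Cauchy--Schwarz together with the Neumann--Poincaré inequality $\mu_1\int_M u_0^2\,d\mu\leq\int_M|\nabla u_0|^2\,d\mu=B$ then yields $B\leq(B/\mu_1)^{1/2}A^{1/2}$, that is, $\mu_1 B\leq A$. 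Substituting $B\leq A/\mu_1$ and using $\int_{\partial M}|\overline{\nabla}z|^2\,d\vartheta\geq\eta_1 C$ (legitimate because $\int_{\partial M}z\,d\vartheta=0$), I find
\begin{equation*}
A\,(m-1)\,\frac{\mu_1+mk}{m\mu_1}\ \geq\ \frac{m-1}{m}A+(m-1)k\,B\ \geq\ c\,\eta_1 C,
\end{equation*}
whence $\xi_1=A/C\geq\frac{mc\eta_1\mu_1}{(m-1)(\mu_1+mk)}$.

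Finally I would upgrade this to a strict inequality by a rigidity argument: equality in $\mu_1 B=A$ forces, through the Cauchy--Schwarz equality, $\Delta_\phi u=\lambda u_0$ for a constant $\lambda$, and then $\Delta_\phi^2 u=\lambda^2 u_0$, so $\Delta_\phi^2 u=0$ gives $\lambda=0$; hence $\Delta_\phi u=0$, and with $\frac{\partial u}{\partial\nu}=0$ this makes $u$ constant, contradicting $\int_{\partial M}u\,d\vartheta=0$ unless $u\equiv0$. Combining the strict bound for $\xi_1$ with \eqref{eq-zeta1-xi1} then gives the claimed estimate. I expect the main obstacle to be precisely the passage $\mu_1 B\leq A$: one cannot apply the Neumann inequality to $u$ directly because $u$ need not be mean-zero, so the subtraction of the average — valid exactly because $\Delta_\phi u$ integrates to zero — is the essential device, as is the equality analysis needed to promote $\geq$ to the strict inequality.
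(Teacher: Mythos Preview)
Your overall strategy matches the paper's: reduce to a lower bound for $\xi_1$ via \eqref{eq-zeta1-xi1}, then take an eigenfunction $f$ for $\xi_1$, feed it into the weighted Reilly inequality \eqref{reillyinequality} (where the normal-derivative terms vanish since $\partial_\nu f=0$), use $\rc_\phi^m\ge -(m-1)k$ and $II\ge cI$, and close with the Neumann estimate $\mu_1 B\le A$ and the boundary Poincar\'e inequality. Your derivation of $\mu_1 B\le A$ through mean-subtraction plus Cauchy--Schwarz is a perfectly good explicit substitute for what the paper simply quotes as \eqref{equation2.24}.

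The gap is in your rigidity step. Your argument that equality in $\mu_1 B=A$ forces $\Delta_\phi u=\lambda u_0$, hence (by biharmonicity) $\lambda=0$ and $u$ constant, is fine \emph{when that inequality actually participates in the chain}. But when $k=0$ the term $(m-1)kB$ disappears, so the displayed chain
\[
A\,(m-1)\,\frac{\mu_1+mk}{m\mu_1}\ \ge\ \tfrac{m-1}{m}A+(m-1)kB\ \ge\ c\,\eta_1 C
\]
has the first $\ge$ as an identity regardless of whether $\mu_1 B=A$. Equality $\xi_1=\frac{mc\eta_1}{m-1}$ therefore says nothing about $\mu_1 B=A$, and your contradiction does not fire. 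Since the theorem allows $k=0$, this is a genuine hole.

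The paper's route to strictness avoids this: equality in the final estimate always forces equality in \eqref{reillyinequality}, hence $\nabla^2 f=\frac{\Delta f}{n}\langle\,,\,\rangle$ (Remark~\ref{remar2.1}); combined with $\partial_\nu f|_{\partial M}=0$ this gives $0=\nabla^2 f(\nu,X)=-\langle\nabla_X\nu,\overline{\nabla}z\rangle$ for every tangential $X$, so $II(\overline{\nabla}z,\overline{\nabla}z)=0$, contradicting $II\ge cI$ and $z$ nonconstant. That argument is uniform in $k$. You can either replace your rigidity step by this one, or keep yours for $k>0$ and add this geometric argument for $k=0$.
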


From the proof of the theorem above we have the following result.
\begin{cor}
Under the same setup as in Theorem~\ref{zeta1estimateteo}, the first nonzero eigenvalue of Problem~\eqref{problemxi1} satisfies 
\begin{equation*}
    \xi_1 > \frac{mc\eta_1\mu_1}{(m-1)(\mu_1 +mk)}.
\end{equation*}
\end{cor}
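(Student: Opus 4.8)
The plan is to extract from the proof of Theorem~\ref{zeta1estimateteo} the purely interior-versus-boundary inequality controlling the $\phi$-biharmonic energy, since that inequality is exactly the variational content of $\xi_1$ once the boundary gradient term (the one carrying the factor $\beta$) is discarded. Concretely, I would show that every $w$ admissible in the Rayleigh quotient \eqref{eq-xi1}, that is $w\in H^2(M)$ with $\int_{\partial M}w\,d\vartheta=0$ and (as dictated by Problem~\eqref{problemxi1}) $\partial w/\partial\nu=0$ on $\partial M$, satisfies
\[
\int_M(\Delta_\phi w)^2\,d\mu\ \geq\ \frac{mc\eta_1\mu_1}{(m-1)(\mu_1+mk)}\int_{\partial M}w^2\,d\vartheta,
\]
and then divide by $\int_{\partial M}w^2\,d\vartheta$ and take the infimum; by \eqref{eq-xi1} this yields the asserted bound for $\xi_1$. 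The weaker route through the \emph{statement} of Theorem~\ref{zeta1estimateteo} and \eqref{eq-zeta1-xi1} alone is insufficient, because $\zeta_{1,\beta}\geq\xi_1+\beta\eta_1$ together with $\zeta_{1,\beta}>C+\beta\eta_1$ does not isolate a lower bound for $\xi_1$; one must revisit the computation, which is precisely why the corollary is stated as a consequence of the proof.

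For the interior-boundary inequality I would start from the weighted Reilly formula obtained by integrating the Bochner identity $\tfrac12\Delta_\phi|\nabla w|^2=|\nabla^2w|^2+\langle\nabla\Delta_\phi w,\nabla w\rangle+\rc_\phi(\nabla w,\nabla w)$ against $d\mu$ and applying the weighted divergence theorem:
\[
\int_M(\Delta_\phi w)^2\,d\mu=\int_M|\nabla^2w|^2\,d\mu+\int_M\rc_\phi(\nabla w,\nabla w)\,d\mu+\int_{\partial M}(\Delta_\phi w)\frac{\partial w}{\partial\nu}\,d\vartheta-\frac12\int_{\partial M}\frac{\partial|\nabla w|^2}{\partial\nu}\,d\vartheta.
\]
Since $\partial w/\partial\nu=0$ on $\partial M$, the first boundary integral vanishes, and the last one collapses, through the shape operator, to $\int_{\partial M}\mathrm{II}(\overline{\nabla}w,\overline{\nabla}w)\,d\vartheta$, where $\mathrm{II}$ is the second fundamental form of $\partial M$. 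The hypothesis that the principal curvatures are at least $c$ then gives $\mathrm{II}(\overline{\nabla}w,\overline{\nabla}w)\geq c|\overline{\nabla}w|^2$, and because $w|_{\partial M}$ has zero mean the Rayleigh characterization of $\eta_1$ yields $\int_{\partial M}|\overline{\nabla}w|^2\,d\vartheta\geq\eta_1\int_{\partial M}w^2\,d\vartheta$.

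To handle the interior terms I would invoke the $m$-Bakry-Emery refinement of the Bochner inequality: writing $\rc_\phi=\rc_\phi^m+\frac{1}{m-n}\nabla\phi\otimes\nabla\phi$ and using $|\nabla^2w|^2+\frac{1}{m-n}\langle\nabla\phi,\nabla w\rangle^2\geq\frac1m(\Delta_\phi w)^2$ (a one-variable quadratic minimization over $\langle\nabla\phi,\nabla w\rangle$), the interior integrals are bounded below by $\frac1m\int_M(\Delta_\phi w)^2\,d\mu-(m-1)k\int_M|\nabla w|^2\,d\mu$, using $\rc_\phi^m\geq-(m-1)k$. The negative Dirichlet energy is absorbed by the Neumann spectral comparison $\int_M(\Delta_\phi w)^2\,d\mu\geq\mu_1\int_M|\nabla w|^2\,d\mu$, valid for functions with $\partial w/\partial\nu=0$ by expanding $w$ minus its weighted mean in Neumann eigenfunctions and using $\mu_j\geq\mu_1$. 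Combining the three estimates and solving the resulting linear inequality for the quotient $\int_M(\Delta_\phi w)^2\,d\mu/\int_{\partial M}w^2\,d\vartheta$ produces exactly $\frac{mc\eta_1\mu_1}{(m-1)(\mu_1+mk)}$.

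The main obstacle, and the only point requiring care beyond bookkeeping, is upgrading $\geq$ to the strict $>$. Equality would force simultaneous rigidity at every step: $\nabla^2w$ proportional to the metric together with $\langle\nabla\phi,\nabla w\rangle=-\frac{m-n}{m}\Delta_\phi w$ (equality in the refined Bochner inequality), $w$ minus its mean a pure first Neumann mode (equality in $\mu_j\geq\mu_1$), $\overline{\nabla}w$ everywhere a principal direction of curvature exactly $c$, and $w|_{\partial M}$ a first eigenfunction of $\overline{\Delta}_\phi$. I would argue that these conditions are incompatible with $\partial w/\partial\nu\equiv0$ and $\Delta_\phi^2w=0$ on a connected $M$ unless $w$ is trivial, so the inequality is strict, which completes the proof.
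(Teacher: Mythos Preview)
Your approach is essentially the paper's. In proving Theorem~\ref{zeta1estimateteo} the paper takes the eigenfunction $f$ for $\xi_1$, feeds it into the Reilly inequality \eqref{reillyinequality} with $\partial_\nu f=0$, combines this with the Neumann comparison $\int_M(\Delta_\phi f)^2\,d\mu\ge\mu_1\int_M|\nabla f|^2\,d\mu$ and the boundary Poincar\'e inequality (after checking $\int_{\partial M}z\,d\vartheta=0$ from the boundary relation $\partial_\nu(\Delta_\phi f)+\xi_1 f=0$), and obtains exactly your lower bound; the bound on $\zeta_{1,\beta}$ is then deduced from \eqref{eq-zeta1-xi1}. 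So the corollary is literally a byproduct of the proof, and your decision to run the estimate for every admissible $w$ rather than for the minimizer is cosmetic. Your observation that the \emph{statement} of the theorem together with \eqref{eq-zeta1-xi1} would not suffice is also correct and is why the corollary is phrased as a consequence of the proof.

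The one genuine gap is the strictness step. You list the rigidity conditions forced by equality but do not identify the mechanism that actually yields a contradiction; in particular, the conditions you enumerate (conformal Hessian, first Neumann mode, $\overline{\nabla}w$ a principal direction with curvature $c$, $w|_{\partial M}$ a first boundary eigenfunction) are not obviously mutually incompatible. The paper's argument is concrete and short: equality in \eqref{reillyinequality} forces $\nabla^2 f=\frac{\Delta f}{n}\langle\,,\,\rangle$, hence $\nabla^2 f(\nu,X)=0$ for every $X$ tangent to $\partial M$; but since $\partial_\nu f\equiv0$ on $\partial M$ one also has $\nabla^2 f(\nu,X)=X(\partial_\nu f)-(\nabla_X\nu)(f)=-II(X,\overline{\nabla}z)$. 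Taking $X=\overline{\nabla}z$ gives $II(\overline{\nabla}z,\overline{\nabla}z)=0$, contradicting $II\ge cI$ with $c>0$ and $z=f|_{\partial M}$ nonconstant. The point is that the conformal-Hessian condition, played against $\partial_\nu f=0$, forces the second fundamental form to \emph{vanish} on $\overline{\nabla}z$, not merely to equal $c|\overline{\nabla}z|^2$; this is what you are missing.
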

\begin{remark}
When $\phi$ is a constant, then $m=n$ and $\rc_\phi^m=\rc$. In this case, Theorem~\ref{zeta1estimateteo} implies \cite[Theorem~1.6]{wangxia4}. For $\phi$ not necessarily constant, Bezerra and Xia in \cite{BezerraXia} obtained a estimate for first eigenvalue of Problem~\eqref{problemzeta1} supposing a lower limitation on the Bakry-Emery Ricci curvature. While we are using $\rc_\phi^m \geq -(m-1)k$, for some $k>0$, Bezerra and Xia~\cite[Theorem~1.5]{BezerraXia} used $\rc_\phi \geq \frac{|\nabla \phi|^2}{na}-b$, for some positive constants $a$ and $b$.
\end{remark}

When $M=B$ is a unit ball in euclidean space and $\phi$ is constant, Wang and Xia \cite{wangxia3} explicitly described the eigenvalues and the corresponding eigenfunctions to the Problem~\eqref{problemtau1} (see \cite[Theorem~1.7]{wangxia3}). Our next result is a lower bound for the first nonzero eigenvalue of Problem~\eqref{problemtau1}.
\begin{theorem}\label{teorematau1}
Let $(M, \langle , \rangle, e^{-\phi}dv)$ be an $n$-dimensional compact smooth measure space with boundary $\partial M$. Suppose that the principal curvatures of $\partial M$ are bounded below by a positive constant $c$ and denote by $\tau_1$ the first eigenvalue of Problem~\eqref{problemtau1} and $\mu_1$ is the first nonzero Neumann eigenvalue of the drifted Laplacian of $M$. 
\begin{enumerate}
    \item \label{thm4-1} Assume that the Bakry-Emery Ricci curvature of $M$ is bounded below by  $ \frac{|\nabla\phi|^2}{na}-b$, for some positive constants $a$ and $b$. Then, 
\begin{equation*}
    \tau_1> \frac{nc(a+1)\mu_1}{n(a+1)(\mu_1+b)-\mu_1}.
\end{equation*}
\item \label{thm4-2} Assume that the $m$-Bakry-Emery Ricci curvature of $M$ is bounded below by $-(m-1)k$ for some constant $k\geq 0$. Then, 
\begin{equation*}
    \tau_1 > \frac{mc\mu_1}{(m-1)(\mu_1+mk)}.
\end{equation*}
\end{enumerate}
\end{theorem}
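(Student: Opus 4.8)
The plan is to combine a Rayleigh--Ritz characterization of $\tau_1$ with the weighted Reilly formula, exactly as for the companion Steklov estimates. First I would record the variational characterization
\[
\tau_1=\min\frac{\int_M(\Delta_\phi w)^2\,d\mu}{\int_{\partial M}|\overline{\nabla}w|^2\,d\vartheta},
\]
the minimum taken over $w\in H^2(M)$ with $\partial w/\partial\nu=0$ on $\partial M$ and $\overline{\nabla}w\not\equiv 0$ (the quotient being invariant under adding constants). Fix a corresponding eigenfunction $u$, so that $\Delta_\phi^2u=0$ in $M$, $\partial u/\partial\nu=0$ on $\partial M$, and $\tau_1=\int_M(\Delta_\phi u)^2d\mu\big/\int_{\partial M}|\overline{\nabla}u|^2d\vartheta$. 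Applying the weighted Reilly formula to $u$, every boundary term carrying a factor $\partial u/\partial\nu$ drops out, and since $\partial u/\partial\nu=0$ forces $\nabla u$ to be tangent to $\partial M$ (so that $\overline{\nabla}(u|_{\partial M})=\nabla u$ there), I am left with
\[
\int_M\Big[(\Delta_\phi u)^2-|\nabla^2u|^2-\rc_\phi(\nabla u,\nabla u)\Big]d\mu=\int_{\partial M}\mathrm{II}(\overline{\nabla}u,\overline{\nabla}u)\,d\vartheta\ \geq\ c\int_{\partial M}|\overline{\nabla}u|^2\,d\vartheta,
\]
where the inequality uses that the principal curvatures are $\geq c>0$.

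The next step is to bound the left-hand integrand above by a multiple of $(\Delta_\phi u)^2$ plus a multiple of $|\nabla u|^2$, the two curvature hypotheses producing the two different constants. For part~\ref{thm4-2} I would use the refined inequality $|\nabla^2u|^2+\rc_\phi(\nabla u,\nabla u)\geq \tfrac{1}{m}(\Delta_\phi u)^2+\rc_\phi^m(\nabla u,\nabla u)$, which follows from the trace Cauchy--Schwarz bound $|\nabla^2 u|^2\geq(\Delta u)^2/n$ together with $\tfrac{x^2}{n}+\tfrac{y^2}{m-n}\geq\tfrac{(x-y)^2}{m}$ applied to $x=\Delta u$, $y=\langle\nabla\phi,\nabla u\rangle$; invoking $\rc_\phi^m\geq-(m-1)k$ then gives integrand $\leq\tfrac{m-1}{m}(\Delta_\phi u)^2+(m-1)k\,|\nabla u|^2$. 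For part~\ref{thm4-1} I would instead write $\Delta u=\Delta_\phi u+\langle\nabla\phi,\nabla u\rangle$, combine $|\nabla^2u|^2\geq(\Delta u)^2/n$ with $\rc_\phi\geq\tfrac{|\nabla\phi|^2}{na}-b$ and $\langle\nabla\phi,\nabla u\rangle^2\leq|\nabla\phi|^2|\nabla u|^2$, and then optimize the resulting quadratic form in the pair $(\Delta_\phi u,\langle\nabla\phi,\nabla u\rangle)$; maximizing over $\langle\nabla\phi,\nabla u\rangle$ produces exactly the factor $1-\tfrac{1}{n(a+1)}$, leaving integrand $\leq\big(1-\tfrac{1}{n(a+1)}\big)(\Delta_\phi u)^2+b\,|\nabla u|^2$.

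It then remains to absorb $\int_M|\nabla u|^2d\mu$, where the Neumann eigenvalue $\mu_1$ enters. Since $\partial u/\partial\nu=0$ gives $\int_M\Delta_\phi u\,d\mu=0$, I set $\bar u=u-\tfrac{1}{\mathrm{vol}_\phi(M)}\int_M u\,d\mu$ and combine $\int_M|\nabla u|^2d\mu=-\int_M\bar u\,\Delta_\phi u\,d\mu\leq\big(\int_M\bar u^2\big)^{1/2}\big(\int_M(\Delta_\phi u)^2\big)^{1/2}$ with the Poincaré inequality $\int_M\bar u^2d\mu\leq\mu_1^{-1}\int_M|\nabla u|^2d\mu$ to obtain $\mu_1\int_M|\nabla u|^2d\mu\leq\int_M(\Delta_\phi u)^2d\mu$. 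Substituting this into the two integrand bounds and dividing by $c\int_{\partial M}|\overline{\nabla}u|^2d\vartheta$ yields precisely the stated lower bounds for $\tau_1$. For strictness, equality throughout would force equality in the last Cauchy--Schwarz and Poincaré steps, i.e. $\Delta_\phi u=-\mu_1\bar u$, whence $0=\Delta_\phi^2u=\mu_1^2\bar u$ and $u$ is constant, contradicting $\overline{\nabla}u\not\equiv0$; hence all inequalities are strict.

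I expect the main obstacle to be the bookkeeping of the weighted Reilly formula---verifying that the boundary terms truly collapse to the second-fundamental-form term, and that it is $\rc_\phi$ (not $\rc_\phi^m$) that appears there---together with, in part~\ref{thm4-1}, carrying out the quadratic optimization carefully so that the constant emerges exactly as $1-\tfrac{1}{n(a+1)}$ rather than some weaker value.
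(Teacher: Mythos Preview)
Your overall strategy coincides with the paper's: apply the weighted Reilly identity to an eigenfunction $u$ of Problem~\eqref{problemtau1}, use the convexity assumption on $\partial M$ to bound the boundary term by $c\int_{\partial M}|\overline{\nabla}u|^2\,d\vartheta$, control $|\nabla^2u|^2+\rc_\phi(\nabla u,\nabla u)$ from below by the appropriate combination of $(\Delta_\phi u)^2$ and $|\nabla u|^2$ (via the algebraic inequality $(x+y)^2\geq \tfrac{x^2}{\alpha+1}-\tfrac{y^2}{\alpha}$), and finally absorb $\int_M|\nabla u|^2\,d\mu$ using $\mu_1\int_M|\nabla u|^2\,d\mu\leq\int_M(\Delta_\phi u)^2\,d\mu$. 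This is exactly the paper's argument; your ``quadratic optimization'' is the same algebraic step as the paper's inequality~\eqref{algebric-ineq} with $\alpha=a$, and your derivation of the Neumann bound via Cauchy--Schwarz plus Poincar\'e is a clean way of justifying the paper's inequality~\eqref{neumannmu1}.

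The one substantive divergence is in the strictness argument. The paper argues that equality would force $\nabla^2u=\tfrac{\Delta u}{n}\langle\,,\rangle$, and then computes $0=\nabla^2u(\nu,X)=-\langle\nabla_X\nu,\overline{\nabla}z\rangle$ for tangent $X$, giving $II(\overline{\nabla}z,\overline{\nabla}z)=0$, which contradicts $II\geq cI$ and $z$ nonconstant. Your route---equality in the Poincar\'e/Neumann step forces $\Delta_\phi u=-\mu_1\bar u$, hence $0=\Delta_\phi^2u=\mu_1^2\bar u$---is elegant and valid whenever that step carries a nonzero coefficient, i.e.\ always in part~\eqref{thm4-1} (since $b>0$) and for $k>0$ in part~\eqref{thm4-2}. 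However, when $k=0$ the term $(m-1)k\int_M|\nabla u|^2\,d\mu$ vanishes and the Neumann step is never invoked, so equality in the final bound does \emph{not} force equality there, and your contradiction does not trigger. To close this gap you should fall back on the Hessian equality $\nabla^2u=\tfrac{\Delta u}{n}\langle\,,\rangle$ (which \emph{is} forced in the $k=0$ case) and run the boundary argument as the paper does.
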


With respect to Problem~\eqref{volumproblem} we get the next result.
\begin{theorem}\label{volumproblemteo}
Let $(M, \langle , \rangle, e^{-\phi}dv)$ be an $n(\geq 2)$-dimensional compact connected smooth metric measure space with boundary $\partial M$ and denote by $\nu$ the outward unit normal vector field of $M$. Then, the first nonzero eigenvalue $p_1$ of Problem~\eqref{volumproblem} satisfies $p_1 \leq \frac{A_\phi}{V_\phi}$, where $A_\phi = \int_{\partial M} e^{-\phi}dA$, $V_\phi=\int_{M}e^{-\phi}dv$. Moreover, if in addition the Bakry-Emery Ricci curvature of $M$ satisfies $\rc_\phi \geq \frac{|\nabla \phi|^2}{na}$, for some positive constant $a$, and there is a point $x_0 \in \partial M$ such that the weighted mean curvature of $\partial M$ satisfies $H_\phi(x_0) \geq \frac{n(a+1)-1}{n(a+1)(n-1)}\frac{A_\phi}{V_\phi}$, then $p_1=\frac{A_\phi}{V_\phi}$ implies that $M$ is isometric to an $n$-dimensional Euclidean ball and $\phi$ is constant.
\end{theorem}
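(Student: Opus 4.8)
The plan is to treat the two assertions separately: the upper bound $p_1\le A_\phi/V_\phi$ is an elementary variational estimate, while the equality case is a weighted Serrin/Weinberger-type rigidity, which is the delicate part. First I would record the Rayleigh characterization. Applying Green's formula for $\Delta_\phi$ (self-adjoint with respect to $d\mu$) to $\Delta_\phi u$ and $u$, using $\Delta_\phi^2u=0$ and $u|_{\partial M}=0$, gives $\int_M(\Delta_\phi u)^2\,d\mu=\int_{\partial M}(\Delta_\phi u)\,\partial_\nu u\,d\vartheta$, and the boundary condition $\overline{\Delta}_\phi u=p\,\partial_\nu u$ (reading $\overline{\Delta}_\phi u$ as $\Delta_\phi u$ restricted to $\partial M$) turns the right side into $p\int_{\partial M}(\partial_\nu u)^2\,d\vartheta$. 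Hence
\[
p_1=\min\frac{\int_M(\Delta_\phi w)^2\,d\mu}{\int_{\partial M}(\partial_\nu w)^2\,d\vartheta},
\]
the minimum being over $w$ with $w|_{\partial M}=0$ and $\partial_\nu w\not\equiv0$. As test function I would use the weighted torsion function $w$ solving $\Delta_\phi w=1$ in $M$, $w|_{\partial M}=0$. The numerator is $\int_M 1\,d\mu=V_\phi$, while the weighted divergence theorem gives $\int_{\partial M}\partial_\nu w\,d\vartheta=\int_M\Delta_\phi w\,d\mu=V_\phi$; Cauchy--Schwarz on $\partial M$ then yields $\int_{\partial M}(\partial_\nu w)^2\,d\vartheta\ge V_\phi^2/A_\phi$, so $p_1\le V_\phi\big/(V_\phi^2/A_\phi)=A_\phi/V_\phi$.

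For the rigidity, I would first extract the equality conditions. If $p_1=A_\phi/V_\phi$ then $w$ is a first eigenfunction and, by the Cauchy--Schwarz equality case, $\partial_\nu w\equiv c_0:=V_\phi/A_\phi$ on $\partial M$; thus $w$ solves the \emph{overdetermined} system $\Delta_\phi w=1$, $w|_{\partial M}=0$, $\partial_\nu w\equiv c_0$. The crucial observation is that the curvature hypothesis $\rc_\phi\ge|\nabla\phi|^2/(na)$ is equivalent to $\rc_\phi^m\ge0$ with $m=n(a+1)$: since $\rc_\phi^m=\rc_\phi-\tfrac1{m-n}\nabla\phi\otimes\nabla\phi$ and $m-n=na$, one has $\rc_\phi^m(X,X)\ge\tfrac1{na}\big(|\nabla\phi|^2|X|^2-\langle\nabla\phi,X\rangle^2\big)\ge0$. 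This lets me invoke the sharpened weighted Bochner inequality $|\nabla^2 w|^2+\rc_\phi(\nabla w,\nabla w)\ge\tfrac{(\Delta_\phi w)^2}{m}+\rc_\phi^m(\nabla w,\nabla w)$.

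I would then run Weinberger's argument on the $P$-function $P=|\nabla w|^2-\tfrac2m w$. Using $\Delta_\phi w=1$ and the Bochner formula $\tfrac12\Delta_\phi|\nabla w|^2=|\nabla^2 w|^2+\rc_\phi(\nabla w,\nabla w)$, the sharpened inequality together with $\rc_\phi^m\ge0$ gives $\Delta_\phi P=2|\nabla^2w|^2+2\rc_\phi(\nabla w,\nabla w)-\tfrac2m\ge0$. Thus $P$ is $\Delta_\phi$-subharmonic and attains its maximum on $\partial M$, where $w=0$ and $|\nabla w|^2=(\partial_\nu w)^2=c_0^2$, so $P\le c_0^2$ with $P\equiv c_0^2$ on $\partial M$.

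Finally I would feed in the boundary condition on $H_\phi$. On $\partial M$ one computes $\partial_\nu P=2c_0\big(\tfrac{m-1}{m}-c_0H_\phi\big)$ from $\nabla w=c_0\nu$ and the Gauss-type splitting $\Delta w=\nabla^2w(\nu,\nu)+H_\phi\,\partial_\nu w$ (with the weight folded into $H_\phi$), and the hypothesis on $H_\phi(x_0)$ is precisely the threshold forcing $\partial_\nu P(x_0)\le0$; since $x_0$ is a boundary maximum of the subharmonic $P$, Hopf's lemma then forces $P\equiv c_0^2$, hence $\Delta_\phi P\equiv0$. Equality throughout gives $\nabla^2 w=\tfrac{\Delta w}{n}g$ and $\rc_\phi^m(\nabla w,\nabla w)=0$; evaluating the Cauchy--Schwarz equality condition at an interior critical point of $w$ (which exists because $M$ is compact) forces $\nabla\phi\equiv0$, so $\phi$ is constant, and then $\nabla^2 w=\tfrac1n g$ identifies $M$ with a Euclidean ball by the standard Obata--Reilly rigidity. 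I expect the main obstacle to be exactly this last step: fixing the normalization of $H_\phi$ so that the single-point hypothesis propagates through Hopf's lemma to a global equality, and then rigorously deducing both $\phi=\mathrm{const}$ and the isometry with a ball from $\nabla^2 w=\tfrac1n g$ and $P\equiv c_0^2$.
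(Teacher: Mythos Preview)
Your strategy is essentially identical to the paper's: the same torsion-function test for the upper bound, the same $P$-function (your $P$ is exactly $2\psi$ with $m=n(a+1)$), the same subharmonicity via Bochner, and the same Hopf/strong-maximum-principle dichotomy using the boundary hypothesis on $H_\phi(x_0)$. Your observation that $\rc_\phi\ge |\nabla\phi|^2/(na)$ is the same as $\rc_\phi^m\ge 0$ for $m=n(a+1)$ is a clean repackaging, but it does not change the argument.

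Two corrections, one cosmetic and one substantive. First, in your boundary formula you dropped the factor $(n-1)$: the correct splitting (the paper's Lemma~2.1) is $\Delta_\phi w=\overline{\Delta}_\phi w+(n-1)H_\phi\,\partial_\nu w+\nabla^2w(\nu,\nu)$, so $\partial_\nu P=2c_0\big(\tfrac{m-1}{m}-(n-1)c_0H_\phi\big)$; this is exactly what makes the stated threshold on $H_\phi(x_0)$ the right one, so your conclusion there survives.

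Second, your final step does not work as written. From $\Delta_\phi P\equiv 0$ you get not two but three equalities: $\nabla^2w=\tfrac{\Delta w}{n}g$, $\rc_\phi^m(\nabla w,\nabla w)=0$, \emph{and} the equality case of the algebraic inequality, namely $\Delta_\phi w+\tfrac{m}{m-n}\langle\nabla\phi,\nabla w\rangle=0$, i.e.\ $\langle\nabla\phi,\nabla w\rangle\equiv -\tfrac{m-n}{m}$. At an interior critical point of $w$ this last identity reads $0=-\tfrac{m-n}{m}\neq 0$, which is a \emph{contradiction}, not the conclusion ``$\nabla\phi\equiv 0$'' you claim. (Your stated ``Cauchy--Schwarz equality condition'' gives nothing at a critical point, since linear dependence with the zero vector is vacuous.) The paper handles this differently: it rewrites the algebraic equality as $\Delta w+\tfrac{1}{a}\langle\nabla\phi,\nabla w\rangle=0$, multiplies by $w$, integrates against $e^{\phi/a}dv$, and obtains $\int_M|\nabla w|^2e^{\phi/a}dv=0$, again a contradiction since $\Delta_\phi w=1$. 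Either way one has shown only that the weighted analysis (with the extra algebraic step that requires $m>n$, i.e.\ $a>0$) is inconsistent with equality; the paper then concludes $\phi$ must be constant and invokes the classical Wang--Xia rigidity for the unweighted problem to identify $M$ with a Euclidean ball. You should do the same: the Obata--Reilly step you cite only becomes available \emph{after} $\phi$ has been reduced to a constant, and that reduction comes from a contradiction argument, not from directly reading off $\nabla\phi\equiv 0$ at a single point.
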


\begin{remark}
The inequality $p_1 \leq \frac{A_\phi}{V_\phi}$ was shown by Huang and Ma in \cite{HuangMa}, our main contribution was made in the second part of the theorem. Moreover, Theorem~\eqref{volumproblemteo} is a generalization for drifted Laplacian of the \cite[Theorem~1.3]{wangxia1} by Wang and Xia. Such a generalization is in the following sense, for $\phi$ constant, we get $p_1 \leq \frac{A}{V}$ where $A= \int_{\partial M}dA$ and $V=\int_{M} dv$, which is the same result obtained in the first part of \cite[Theorem~1.3]{wangxia1}. Furthermore, for $\phi$ constant, $H_\phi$ becomes $H$ and our hypothesis in the second part of Theorem~\ref{volumproblemteo} becomes $\rc \geq 0$ and there is a point $x_0 \in \partial M$ such that $H(x_0) \geq \frac{n(a+1)-1}{n(a+1)(n-1)}\frac{A}{V}$ which implies that there is a point $x_0 \in \partial M$ such that $H(x_0) > \frac{A}{nV}$, since $a>0$, which is enough to get the second part of \cite[Theorem~1.3]{wangxia1}.
\end{remark}

Now, on a smooth metric measure space $(M^n, \langle , \rangle, e^{-\phi}dv)$, let us consider the following eigenvalue problem with the Wentzell-type boundary condition for drifted Laplacian:
\begin{equation}\label{wentzell-problem}
    \left \{ \begin{array}{ll}\Delta_\phi u = 0 & \mbox{in } M,  \\
    -\rho \overline{\Delta}_\phi u + \frac{\partial u}{\partial \nu} = \gamma u & \mbox{on } \partial M, \end{array} \right.
\end{equation}
where $\rho$ is a given real number. For $\rho \geq 0$, the spectrum of Problem~\eqref{wentzell-problem} consists in an increasing countable sequence of eigenvalues 
\begin{equation*}
    \gamma_{0, \rho}=0 < \gamma_{1, \rho} \leq \gamma_{2, \rho} \leq \cdots \to + \infty.
\end{equation*}

By the variational principle, we can see that the first nonzero eigenvalue $\gamma_{1, \phi}$ of Problem~\eqref{wentzell-problem} has the following characterization:
\begin{equation}\label{gamma1}
    \gamma_{1, \phi} = \min \Big\{\frac{\int_M|\nabla u|^2d\vartheta+\rho\int_{\partial M}|\overline{\nabla}u|^2 d\mu}{\int_{\partial M} u^2 d\mu} \Big| u\in W^{1,2}(M), u\neq0, \int_{\partial M}ud\mu=0  \Big\}. 
\end{equation}

The next result is a lower estimate for the first nonzero eigenvalue of Problem~\eqref{wentzell-problem}.
\begin{theorem}\label{wentzell-theorem}
Let $(M, \langle , \rangle, e^{-\phi}dv)$ be an $n(\geq 2)$-dimensional compact connected smooth metric measure space with boundary $\partial M$ and denote by $\nu$ the outward unit normal vector field of $M$. If $\rc_\phi^m \geq -k$ for some non-negative constant $k$, the second fundamental form of $\partial M$ satisfies $II\geq cI$ (in the quadratic form sense), and $H_\phi \geq c$ for some positive constant $c$, then the first nonzero eigenvalue of Problem~\eqref{wentzell-problem} satisfies
\begin{equation}\label{gamma1estimate}
    \gamma_{1, \phi} \leq \rho \eta_1 + \frac{2\eta_1+k+\sqrt{(2\eta_1+k)^2-4(n-1)\eta_1c^2}}{2(n-1)c},
\end{equation}
where $\eta_1$ is the first nonzero closed eigenvalue of the drifted Laplacian on the boundary $\partial M$. Equality in \eqref{gamma1estimate} holds if and only if $k=0$ and $M$ is isometric to an $n$-dimensional Euclidean ball of radius $\frac{1}{c}$, $\phi$ is the nonzero constant function and $m=n$.
\end{theorem}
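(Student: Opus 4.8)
The plan is to obtain \eqref{gamma1estimate} as an upper estimate read off directly from the variational characterization \eqref{gamma1}, by inserting into the Rayleigh quotient a test function built from the extremal eigenfunction on the boundary. Let $f$ be a first nonzero eigenfunction of the drifted Laplacian on $\partial M$, so that $\overline{\Delta}_\phi f=-\eta_1 f$, $\int_{\partial M}f\,d\vartheta=0$ and $\int_{\partial M}|\overline{\nabla}f|^2\,d\vartheta=\eta_1\int_{\partial M}f^2\,d\vartheta$. I would work in Fermi coordinates $(r,y)$ along the boundary, where $r=d(\cdot,\partial M)$ and $y\in\partial M$, and set $u(x)=f(y(x))\,h(r(x))$ for a radial profile $h$ with $h(0)=1$ to be optimized at the end. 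This makes $u|_{\partial M}=f$, so $\int_{\partial M}u\,d\vartheta=0$ and $u$ is admissible in \eqref{gamma1}; moreover the $\rho$-term contributes exactly $\rho\eta_1$, and the problem reduces to estimating $\int_M|\nabla u|^2\,d\mu\,/\,\int_{\partial M}f^2\,d\vartheta$.

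Writing the metric as $dr^2+g_{ij}(r,y)\,dy^i dy^j$ and the weighted volume element as $d\mu=\Theta(r,y)\,dr\,d\vartheta$ with $\Theta(0,\cdot)=1$, the gradient splits orthogonally and
\[
\int_M|\nabla u|^2\,d\mu=\int_{\partial M}\!\int_0^{r(y)}\big(f^2 h'(r)^2+h(r)^2|\overline{\nabla}_r f|^2\big)\,\Theta(r,y)\,dr\,d\vartheta,
\]
where $|\overline{\nabla}_r f|^2=g^{ij}(r,y)\partial_i f\,\partial_j f$ is the tangential gradient measured in the parallel hypersurface at depth $r$. The hypotheses enter through two comparison estimates along the inward normal geodesics: the convexity assumption $II\geq cI$ controls the contraction of the induced metric and thereby bounds $g^{ij}(r,y)$, hence $|\overline{\nabla}_r f|^2$, in terms of $|\overline{\nabla}f|^2$; while $\rc_\phi^m\geq -k$ together with $H_\phi\geq c$ gives, via the weighted mean-curvature (Riccati) comparison for $\Delta_\phi r$, a pointwise upper bound on the weighted Jacobian $\Theta(r,y)$. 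Feeding these into the displayed integral reduces the estimate to a one-dimensional weighted variational problem for $h$ alone.

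I would then choose $h$ from a one-parameter family whose extremal member is linear, $h(r)=1-cr$ — precisely the profile that reconstructs the degree-one harmonic $u=x_i$ on the Euclidean ball of radius $1/c$, where the estimate is sharp — and optimize the resulting quotient. The normal energy produces a term proportional to $\int h'^2$, the tangential energy a term proportional to $\eta_1\int h^2$, and the curvature constant $k$ enters through the Jacobian bound; balancing these leads to the quadratic relation $(n-1)c\,t^2-(2\eta_1+k)\,t+\eta_1 c=0$ for the optimal value $t$ of $\int_M|\nabla u|^2\,d\mu\,/\,\int_{\partial M}f^2\,d\vartheta$, whose larger root is exactly the second summand of \eqref{gamma1estimate}. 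Adding $\rho\eta_1$ yields the stated bound. For the equality discussion I would track sharpness of each comparison: $II\geq cI$ is an equality only on a totally umbilic boundary with all principal curvatures equal to $c$, and the weighted comparison then forces $k=0$, $\phi$ constant (so $m=n$) and the normal geodesics to meet at a single focal point, pinning the geometry to the round ball of radius $1/c$; there $\eta_1=(n-1)c^2$, the discriminant in \eqref{gamma1estimate} vanishes, and one recovers $\gamma_{1,\phi}=\rho\eta_1+c$.

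The main obstacle, I expect, is the second comparison: extracting a clean pointwise upper bound on $\Theta(r,y)$ throughout the collar from the combination $\rc_\phi^m\geq -k$ and $H_\phi\geq c$ (rather than the more usual $\rc_\phi^m\geq -(m-1)k$ normalization), while carrying the $m$-versus-$n$ bookkeeping correctly through the Riccati inequality. A secondary technical point is ensuring the chosen profile is supported inside the focal radius of $\partial M$, so that $u\in W^{1,2}(M)$ and the almost-everywhere gradient computation above is legitimate despite the cut locus of the distance function.
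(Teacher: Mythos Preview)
Your approach is genuinely different from the paper's, and it has a real gap. The paper does \emph{not} work in Fermi coordinates with a radial test function; it takes as test function the $\phi$-\emph{harmonic extension} $u$ of the boundary eigenfunction $z$ (so $\Delta_\phi u=0$ in $M$, $u|_{\partial M}=z$), and then applies the Reilly-type inequality \eqref{reillyinequality} directly. With $h=\partial_\nu u$, Reilly plus the hypotheses $\rc_\phi^m\ge -k$, $II\ge cI$, $H_\phi\ge c$ yield
\[
0\ \ge\ (n-1)c\!\int_{\partial M}h^2\,d\vartheta\ -\ (2\eta_1+k)\!\int_{\partial M}zh\,d\vartheta\ +\ c\eta_1\!\int_{\partial M}z^2\,d\vartheta,
\]
a quadratic in $s=\big(\int h^2/\int z^2\big)^{1/2}$ whose larger root is the second summand in \eqref{gamma1estimate}; since $\int_M|\nabla u|^2\,d\mu=\int_{\partial M}zh\,d\vartheta\le s\int z^2$, the bound follows. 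Nothing about the interior geometry beyond the integral identity is used.

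Your Fermi-coordinate scheme breaks at the tangential term. With $II\ge cI$ and the inward normal, the induced metric on parallel hypersurfaces \emph{contracts}: $\partial_r g_{ij}=-2\,II_{ij}\le -2c\,g_{ij}$, hence $g^{ij}(r,\cdot)\ge e^{2cr}g^{ij}(0,\cdot)$ and $|\overline{\nabla}_r f|^2\ge e^{2cr}|\overline{\nabla}f|^2$. The hypothesis gives only a \emph{lower} bound on $II$, so you have no upper control on $|\overline{\nabla}_r f|^2$ at all; the weighted Jacobian decay you invoke cannot compensate without an upper bound on $II$ that is not assumed. Moreover, your product ansatz $u=f(y)h(r)$ is not $\phi$-harmonic off the model ball, so even with optimal one-dimensional $h$ it has strictly larger Dirichlet energy than the harmonic extension---you cannot hope to recover the same quadratic, and in particular the coefficient $(n-1)c$ attached to $t^2$ in your claimed relation has no source in your setup (in the paper it comes from the $(n-1)H_\phi h^2$ boundary term in Reilly, not from any radial integral). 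The route that actually closes is: harmonic extension $+$ Reilly inequality, then read off the quadratic in the normal derivative.
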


We can see that when $\rho=0$ the Problem~\eqref{wentzell-problem} becomes the second order Steklov problem for drifted Laplacian:
\begin{equation}\label{steklov-problem}
    \left \{ \begin{array}{ll}\Delta_\phi u = 0 & \mbox{in } M,  \\
    \frac{\partial u}{\partial \nu} = q u & \mbox{on } \partial M, \end{array} \right.
\end{equation}
which has a discrete spectrum consisting in a sequence
\begin{equation*}
    q_0=0 < q_1 \leq q_2 \leq \cdots \to + \infty.
\end{equation*}
In this case, we can to obtain the following result.
\begin{theorem}\label{steklov-theorem}
Under the same setup as in Theorem~\ref{wentzell-theorem}, the first nonzero eigenvalue of Problem~\eqref{steklov-problem} satisfies
\begin{equation*}
    q_1 > \frac{c\eta_1}{2\eta_1+k},
\end{equation*}
where $\eta_1$ is the first nonzero closed eigenvalue of the drifted Laplacian on the boundary $\partial M$.
\end{theorem}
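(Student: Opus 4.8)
The plan is to test the weighted Reilly formula against a first eigenfunction of the Steklov problem~\eqref{steklov-problem} and to combine it with the refined Bochner inequality coming from the lower bound on $\rc_\phi^m$. Let $u$ be an eigenfunction associated with $q_1$, so that $\Delta_\phi u = 0$ in $M$, $\frac{\partial u}{\partial \nu} = q_1 u$ on $\partial M$, and, by orthogonality to the constants, $\int_{\partial M} u\, d\vartheta = 0$. Writing $z = u|_{\partial M}$ and $u_\nu = \frac{\partial u}{\partial \nu} = q_1 z$, I would first record the two elementary identities valid for a $\Delta_\phi$-harmonic function: $\int_M |\nabla u|^2\, d\mu = q_1\int_{\partial M} z^2\, d\vartheta$ (integration by parts) and $\int_{\partial M} z\,\overline{\Delta}_\phi z\, d\vartheta = -\int_{\partial M}|\overline{\nabla} z|^2\, d\vartheta$ (divergence theorem on the closed manifold $\partial M$).

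Next I would invoke the weighted Reilly formula
\[
\int_M\big[(\Delta_\phi u)^2 - |\nabla^2 u|^2 - \rc_\phi(\nabla u,\nabla u)\big]\,d\mu = \int_{\partial M}\big[H_\phi\, u_\nu^2 + 2u_\nu\,\overline{\Delta}_\phi z + II(\overline{\nabla} z,\overline{\nabla} z)\big]\,d\vartheta.
\]
Since $\Delta_\phi u = 0$, the left-hand side is $-\int_M\big(|\nabla^2 u|^2 + \rc_\phi(\nabla u,\nabla u)\big)\,d\mu$. The key analytic input is the refined Bochner inequality $|\nabla^2 u|^2 + \rc_\phi(\nabla u,\nabla u) \ge \frac{(\Delta_\phi u)^2}{m} + \rc_\phi^m(\nabla u,\nabla u)$, which follows from $|\nabla^2 u|^2 \ge (\Delta u)^2/n$ together with the algebraic inequality $\frac{x^2}{n} + \frac{y^2}{m-n} \ge \frac{(x-y)^2}{m}$ (Cauchy--Schwarz) applied to $x = \Delta u$ and $y = \langle\nabla\phi,\nabla u\rangle$; using $\Delta_\phi u = 0$ and $\rc_\phi^m \ge -k$ this yields the pointwise bound $-\big(|\nabla^2 u|^2 + \rc_\phi(\nabla u,\nabla u)\big) \le k|\nabla u|^2$.

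Substituting the two identities, the geometric hypotheses $H_\phi \ge c$ and $II \ge cI$, and the bound above into the Reilly formula gives, after integration, the scalar inequality $c q_1^2\,P + (c - 2q_1)\,Q \le k q_1\,P$, where $P = \int_{\partial M} z^2\, d\vartheta > 0$ and $Q = \int_{\partial M}|\overline{\nabla} z|^2\, d\vartheta$. Because $z$ has zero $d\vartheta$-average, the variational characterization of $\eta_1$ gives the boundary Poincaré inequality $Q \ge \eta_1 P$. To conclude I would argue by contradiction: if $q_1 \le \frac{c\eta_1}{2\eta_1+k}$, then since $k\ge0$ one has $q_1 \le c/2$, hence $c - 2q_1\ge 0$, so $Q$ may be replaced by $\eta_1 P$, producing $c q_1^2 - (2\eta_1+k)q_1 + c\eta_1 \le 0$. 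The quadratic $f(q) = cq^2 - (2\eta_1+k)q + c\eta_1$ has positive product of roots $\eta_1$ and positive sum $(2\eta_1+k)/c$, so both roots are positive and the smaller, $r_-$, satisfies $r_- = \eta_1/r_+ > \frac{c\eta_1}{2\eta_1+k}$ because $r_+ < (2\eta_1+k)/c$; thus $f(q_1)\le 0$ forces $q_1\ge r_- > \frac{c\eta_1}{2\eta_1+k}$, contradicting the assumption. Hence $q_1 > \frac{c\eta_1}{2\eta_1+k}$, with strictness inherited from the strict inequality $r_- > \frac{c\eta_1}{2\eta_1+k}$.

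The main obstacle I anticipate is the correct assembly of the weighted Reilly formula, in particular the precise form of its boundary integrand with the weighted mean curvature $H_\phi$ and the tangential term $II(\overline{\nabla} z,\overline{\nabla} z)$, together with invoking the refined Bochner inequality with the right conversion between $\rc_\phi$ and $\rc_\phi^m$. Once these are in place, the remaining estimate, including the sign bookkeeping for $c-2q_1$ and the elementary quadratic analysis, is routine and mirrors the computation underlying Theorem~\ref{wentzell-theorem}.
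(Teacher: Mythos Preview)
Your proposal is correct and follows the same route as the paper: apply the weighted Reilly inequality to a first Steklov eigenfunction, insert the hypotheses on $\rc_\phi^m$, $H_\phi$ and $II$, and close with the boundary Poincar\'e inequality $Q\ge\eta_1 P$. Two minor remarks: the mean-curvature term in \eqref{reillyequality} is $(n-1)H_\phi\,u_\nu^{2}$, not $H_\phi\,u_\nu^{2}$ (this only strengthens your scalar inequality since $n\ge 2$), and the paper bypasses your quadratic analysis altogether by simply discarding that positive term---which is exactly where the strictness comes from---to obtain $\frac{kq_1}{\eta_1}Q\ge k\int_M|\nabla u|^2\,d\mu>(c-2q_1)Q$, whence $q_1>\frac{c\eta_1}{2\eta_1+k}$ follows in one line.
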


\begin{remark}
Theorem~\eqref{wentzell-theorem} generalize a result obtained by Xia and Wang in \cite{wangxia3} and a result obtained by Zhao et al.\cite{Zhao}. In fact, if $\phi$ is constant the Theorem~\eqref{wentzell-theorem} implies \cite[Theorem~1.1]{wangxia3} and if $k=0$ Theorem~\eqref{wentzell-theorem} implies \cite[Theorem~4.1]{Zhao}. Moreover, when $\phi$ is a constant Theorem~\eqref{steklov-theorem} implies \cite[Theorem~1.3; i)]{wangxia3} and if $k=0$ we get $q_1>\frac{c}{2}$ which is Escobar's well-known estimate \cite[Theorem~8]{Escobar}.
\end{remark}

Reilly's formula \eqref{reillyequality} and Inequality~\eqref{reillyinequality} for drifted Laplacian play an important role in our demonstrations. The method used in our demonstrations is classic and has been widely used in papers in the bibliography. For instance, our proofs are mainly motivated by the corresponding results for the Bi-Laplacian case (see \cite{ccwx}, \cite{wangxia4}) and by similar results for drifted and Bi-drifted Laplacian (see \cite{BezerraXia}, \cite{DuBezerra}, \cite{HuangMa}, \cite{Zhao}). 

\section{Proof of Theorems}
In order to prove the results of the previous section let us fix some notations and remember facts known as the famous Reilly's formula. For $n$-dimensional compact manifold  $M$ with boundary $\partial M$, let us denote $\langle , \rangle$ the Riemannian metric on $M$ as well as that induced on $\partial M$. Let us consider ${\nabla}$ and $\Delta_\phi$ the connection and the drifted Laplacian on $M$, respectively, and denote by $\nu$ be the unit outward normal vector of $\partial M$, so that the shape operator of $\partial M$ is given by $S(X) = \nabla_X \nu $ and the second fundamental form of $\partial M$ is defined as $II(X, Y) =\langle S(X), Y\rangle$, where $X, Y \in T\partial M$. We call the principal curvatures of $\partial M$ to eigenvalues of shape operator and the mean curvature $H$ of $\partial M$ is given by $H=\frac{1}{n} \tr S$, where $\tr$ is the trace operator calculated in the metric $\langle , \rangle$. 

Ma and Du~\cite{MaDu} studied the drifted Laplacian and obtained an extension to classical Reilly's formula on compact smooth metric measure space $(M, \langle , \rangle, e^{-\phi}dv)$ with boundary $\partial M$. In fact, for a $f\in C^\infty(M)$ they showed the following identity
\begin{equation}\label{reillyequality}
\begin{split}
&\int_M \Big[ ({\Delta_\phi}f)^2 - |{\nabla}^2f|^2-\rc_\phi({\nabla}f, {\nabla}f) \Big]e^{-\phi}dv\\
&=\int_{\partial M}\Big[2(\overline{\Delta}_\phi f) f_\nu +(n-1)H_\phi f_\nu^2 + II (\overline{\nabla} f, \overline{\nabla} f) \Big]e^{-\phi}dA,
\end{split}
\end{equation}
where $ H_\phi = H - \frac{1}{n-1}\phi_\nu$ denotes the weighted mean curvature (also called $\phi$-mean curvature, here $\phi_\nu=\frac{\partial \phi}{\partial \nu}$), $\nabla^2 f$ is the Hessian of $f$; $\overline{\Delta}$, $\overline{\Delta}_\phi$ and $\overline{\nabla}$  represent the Laplacian operator, the drifted Laplacian operator and the gradient on $\partial M$, with respect to the induced metric on $\partial M$, respectively. Sometimes, we will use the notations $d\mu=e^{-\phi}dv$ and $d\vartheta = e^{-\phi} dA$, where $dv$ and $dA$ are the Riemannian volume measure on $M$ and $\partial M$, respectively.

Reilly's formula \eqref{reillyequality} can be modified for an inequality where we get $\rc_\phi^m$ instead of $\rc_\phi$. First remember that Schwarz inequality is 
\begin{equation*}
    |\nabla^2f|^2\geq \frac{1}{n}(\Delta f)^2,
\end{equation*}
with equality holding if and if $\nabla^2f=\frac{\Delta f}{n}\langle, \rangle$. Consider the following basic algebraic inequality
\begin{equation}\label{algebric-ineq}
    (a+b)^2\geq \frac{a^2}{\alpha+1} - \frac{b^2}{\alpha} \quad \mbox{for all} \quad \alpha >0,
\end{equation}
with equality holding if and only if $a+\frac{\alpha+1}{\alpha}b=0$.
Substituting $\alpha=\frac{m-n}{n}$ in Inequality~\eqref{algebric-ineq} and using Schwarz inequality we obtain, when $m>n$, (see \cite{DuBezerra}, \cite{HuangMa}, \cite{TuHuang})
\begin{equation}\label{eq2.2}
    |\nabla^2f|^2 \geq \frac{1}{n}(\Delta f)^2= \frac{1}{n}(\Delta_\phi f + \langle \nabla \phi, \nabla f \rangle)^2 \geq \frac{(\Delta_\phi f)^2}{m}-\frac{\langle \nabla \phi, \nabla f \rangle}{m-n}^2.
\end{equation}
Thus, inserting \eqref{eq2.2} into \eqref{reillyequality} we get 
\begin{equation}\label{reillyinequality}
\begin{split}
&\int_M \Big[\frac{m-1}{m}({\Delta_\phi}f)^2 - \rc_\phi^m({\nabla}f, {\nabla}f) \Big]d\mu\\
&\geq \int_{\partial M}\Big[2(\overline{\Delta}_\phi f) f_\nu +(n-1)H_\phi (f_\nu)^2 + II (\overline{\nabla} f, \overline{\nabla} f) \Big]d\vartheta.
\end{split}
\end{equation}

\begin{remark}\label{remar2.1}
It is important to note that:
\begin{itemize}
    \item[i)] When $m=n$, we know that $\phi$ is a constant, then $\Delta_\phi = \Delta$ and \eqref{reillyequality} becomes classic Reilly's formula \cite{reilly}, and the equality in \eqref{reillyinequality} holds if and only if $\nabla^2f=\frac{\Delta f}{n}\langle, \rangle$.
    \item[ii)] When $m>n$, the equality holds in \eqref{reillyinequality} if and only if $\nabla^2f=\frac{\Delta f}{n}\langle, \rangle$ and $\Delta_\phi f +\frac{m}{m-n}\langle \nabla \phi, \nabla f \rangle=0$.
\end{itemize}
\end{remark}

We will need of the two following lemmas.
\begin{lem}[\cite{BezerraXia}]\label{lemma}
Let $M$ an $n$-dimensional Riemannian manifold with boundary $\partial M$ and let $f \in C^{\infty}(m)$. Then for all $p \in M$ we have
\begin{equation*}\label{lemmaequality}
    \Delta_\phi f = \overline{\Delta_\phi}f + (n-1)H_\phi \frac{\partial f}{\partial \nu} + \nabla^2 f(\nu, \nu),
\end{equation*}
where $H_\phi$ is the weighted curvature of $\partial M$, $\Delta_\phi$ and $\overline{\Delta}_\phi$ are the drifted Laplacian operators defined in $M$ and $\partial M$, respectively.
\end{lem}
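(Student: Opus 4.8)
The plan is to reduce the weighted identity to the classical decomposition of the Laplacian at a boundary point and then to absorb the drift term $-\langle\nabla\phi,\nabla\cdot\rangle$ into the intrinsic boundary operators. Fix $p\in\partial M$ and choose a local orthonormal frame $\{e_1,\dots,e_{n-1},e_n=\nu\}$ adapted to $\partial M$, so that $e_1,\dots,e_{n-1}$ are tangent to $\partial M$ and $e_n=\nu$ is the outward unit normal. Computing the trace of the Hessian in this frame, I would split the full Laplacian as
\[
\Delta f=\sum_{i=1}^{n}\nabla^2 f(e_i,e_i)=\sum_{i=1}^{n-1}\nabla^2 f(e_i,e_i)+\nabla^2 f(\nu,\nu).
\]

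First I would handle the tangential sum. Starting from $\nabla^2 f(e_i,e_i)=e_i(e_i f)-(\nabla_{e_i}e_i)f$ and applying the Gauss formula $\nabla_{e_i}e_i=\overline{\nabla}_{e_i}e_i+\langle\nabla_{e_i}e_i,\nu\rangle\nu$ together with the identity $\langle\nabla_{e_i}e_i,\nu\rangle=-\langle e_i,\nabla_{e_i}\nu\rangle=-II(e_i,e_i)$ (obtained by differentiating $\langle e_i,\nu\rangle=0$), one gets $\nabla^2 f(e_i,e_i)=\overline{\nabla}^2 f(e_i,e_i)+II(e_i,e_i)f_\nu$. Summing over $i=1,\dots,n-1$ and recalling $\tr S=\sum_{i=1}^{n-1}II(e_i,e_i)=(n-1)H$ yields the classical decomposition $\Delta f=\overline{\Delta}f+(n-1)H f_\nu+\nabla^2 f(\nu,\nu)$.

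Next I would pass to the weighted operators. Decomposing both gradients along the boundary into tangential and normal parts, $\nabla\phi=\overline{\nabla}\phi+\phi_\nu\nu$ and $\nabla f=\overline{\nabla}f+f_\nu\nu$, gives $\langle\nabla\phi,\nabla f\rangle=\langle\overline{\nabla}\phi,\overline{\nabla}f\rangle+\phi_\nu f_\nu$. Subtracting this from the classical identity and regrouping, I obtain
\[
\Delta_\phi f=\big(\overline{\Delta}f-\langle\overline{\nabla}\phi,\overline{\nabla}f\rangle\big)+\big[(n-1)H-\phi_\nu\big]f_\nu+\nabla^2 f(\nu,\nu)=\overline{\Delta}_\phi f+(n-1)H_\phi f_\nu+\nabla^2 f(\nu,\nu),
\]
where the last equality uses the definition $H_\phi=H-\frac{1}{n-1}\phi_\nu$, so that $(n-1)H-\phi_\nu=(n-1)H_\phi$. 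This is exactly the claimed identity.

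The argument is entirely local and elementary, so there is no genuine analytic obstacle; the statement is a pointwise computation valid at each $p\in\partial M$. The only point requiring care is the bookkeeping of sign conventions: one must consistently use the paper's conventions $S(X)=\nabla_X\nu$, $II(X,Y)=\langle S(X),Y\rangle$ and $(n-1)H=\tr S$, and correctly identify $\langle\nabla_{e_i}e_i,\nu\rangle=-II(e_i,e_i)$ when invoking the Gauss formula, since a sign slip there would corrupt the mean-curvature term. Everything else reduces to the tangential/normal splitting of $\nabla\phi$ and $\nabla f$.
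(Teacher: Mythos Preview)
Your proof is correct and is exactly the standard local computation one would expect. Note, however, that the paper does not supply its own proof of this lemma: it is stated with a citation to \cite{BezerraXia} and used as a black box, so there is no in-paper argument to compare against. Your derivation---splitting $\Delta f$ via an adapted frame into the intrinsic boundary Laplacian, the mean-curvature term, and the normal-normal Hessian, and then absorbing the drift via the tangential/normal decomposition of $\nabla\phi$ and $\nabla f$---is precisely the classical route and matches the conventions used in the paper (with the understanding that $\tr S=(n-1)H$, as is implicit in the definition $H_\phi=H-\tfrac{1}{n-1}\phi_\nu$).
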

The next lemma is a slight modification of Theorem~1.6 in \cite{HuangRuan} and Proposition~2.2 in \cite{BatistaSantos}.
\begin{lem}\label{lemma2.1}
Let $M$ an  $n$-dimensional metric measure space with nonempty boundary $\partial M$ and $\rc_\phi^m \geq 0$. If the second fundamental form of $\partial M$ satisfies $II \geq cI$ (in the quadratic form sense) and $H_\phi \geq c$  for some positive constant $c$, then 
\begin{equation*}
    \eta_1 \geq (n-1)c^2,
\end{equation*}
where $\eta_1$ is the first nonzero eigenvalue of the drifted Laplacian acting on functions on $\partial M$. The equality holds if and only if $M$ is isometric to an Euclidean ball of radius $\frac{1}{c}$, $f$ is constant and $m=n$.
\end{lem}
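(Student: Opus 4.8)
The plan is to test the weighted Reilly inequality \eqref{reillyinequality} against the $\phi$-harmonic extension of a first boundary eigenfunction. First I would pick a first nonzero eigenfunction $f$ of $\overline{\Delta}_\phi$ on $\partial M$, so that $\overline{\Delta}_\phi f=-\eta_1 f$ and $\int_{\partial M}f\,d\vartheta=0$, and solve the Dirichlet problem $\Delta_\phi u=0$ in $M$ with $u|_{\partial M}=f$; existence and boundary regularity are standard since $\Delta_\phi$ is uniformly elliptic. Substituting $u$ into \eqref{reillyinequality}, the term $\frac{m-1}{m}(\Delta_\phi u)^2$ vanishes and $\rc_\phi^m\geq 0$ makes the left-hand side $\leq 0$; using $u|_{\partial M}=f$, $\overline{\nabla}u=\overline{\nabla}f$ and $\overline{\Delta}_\phi u=-\eta_1 f$ on $\partial M$, I obtain
\[ 0\geq \int_{\partial M}\Big[-2\eta_1 f\,u_\nu+(n-1)H_\phi\,u_\nu^2+II(\overline{\nabla}f,\overline{\nabla}f)\Big]d\vartheta. \]

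Next I would insert the hypotheses $H_\phi\geq c$ and $II\geq cI$, together with the two integration-by-parts identities on the closed manifold $\partial M$: $\int_{\partial M}|\overline{\nabla}f|^2 d\vartheta=\eta_1\int_{\partial M}f^2 d\vartheta$ and (using $\Delta_\phi u=0$) $\int_{\partial M}f\,u_\nu\,d\vartheta=\int_M|\nabla u|^2 d\mu$. With $X=\int_{\partial M}f\,u_\nu\,d\vartheta$, $Y=\int_{\partial M}u_\nu^2\,d\vartheta$ and $Z=\int_{\partial M}f^2\,d\vartheta$, this gives the key estimate $2\eta_1 X\geq (n-1)cY+c\eta_1 Z$. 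Note $X=\int_M|\nabla u|^2 d\mu>0$, since $X=0$ would make $u$, and hence $f$, constant, contradicting $\eta_1>0$. Bounding $Y\geq X^2/Z$ by Cauchy–Schwarz and then applying AM–GM in the form $(n-1)c\,X^2/Z+c\eta_1 Z\geq 2cX\sqrt{(n-1)\eta_1}$, I reach $2\eta_1 X\geq 2cX\sqrt{(n-1)\eta_1}$; dividing by $X>0$ and squaring yields $\eta_1\geq (n-1)c^2$.

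For the rigidity, equality forces every inequality above to be saturated. By Remark~\ref{remar2.1}, equality in \eqref{reillyinequality} gives $\nabla^2 u=\frac{\Delta u}{n}\langle\,,\rangle$ (and $\Delta_\phi u+\frac{m}{m-n}\langle\nabla\phi,\nabla u\rangle=0$ when $m>n$), together with $\rc_\phi^m(\nabla u,\nabla u)\equiv 0$. Equality in Cauchy–Schwarz forces $u_\nu=\lambda f$ on $\partial M$, and equality in AM–GM together with $\eta_1=(n-1)c^2$ pins $\lambda=c$, so $u_\nu=cf$. Combining $\Delta_\phi u=0$ with the extra relation shows $\langle\nabla\phi,\nabla u\rangle=0$, hence $\Delta u=0$ and $\nabla^2 u\equiv 0$; thus $\nabla u$ is a nontrivial parallel field of constant length, and the umbilic condition $II=cI$ with $u_\nu=cf$ identifies $M$ with the Euclidean ball of radius $1/c$, forcing $\phi$ constant and $m=n$, exactly as in \cite{HuangRuan} and \cite{BatistaSantos}.

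I expect the rigidity to be the main obstacle: passing from the pointwise infinitesimal data ($\nabla^2 u\equiv 0$, umbilic boundary with constant weighted mean curvature, $u_\nu=cf$) to a global isometry with the round ball requires an Obata/Reilly-type argument adapted to the weighted measure, and one must check carefully that saturation of the curvature bound, the boundary bounds, and the two analytic inequalities is mutually consistent and collapses the Bakry–Émery correction so that $m=n$ and $\phi$ is constant.
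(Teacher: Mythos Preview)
The paper does not give its own proof of this lemma; it is stated as a slight modification of results in \cite{HuangRuan} and \cite{BatistaSantos} and simply quoted. Your strategy---take a first eigenfunction $f$ of $\overline{\Delta}_\phi$ on $\partial M$, extend it $\phi$-harmonically to $M$, and feed the extension into the weighted Reilly inequality \eqref{reillyinequality}---is exactly the standard approach used in those references, and your chain $2\eta_1 X\geq (n-1)cY+c\eta_1 Z\geq (n-1)cX^2/Z+c\eta_1 Z\geq 2c\sqrt{(n-1)\eta_1}\,X$ followed by division by $X>0$ is correct and yields the inequality.

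For the equality case your outline is sound: saturation of \eqref{reillyinequality} together with $\Delta_\phi u=0$ forces $\langle\nabla\phi,\nabla u\rangle=0$ (via Remark~\ref{remar2.1} when $m>n$), hence $\Delta u=0$ and $\nabla^2 u\equiv 0$; equality in Cauchy--Schwarz gives $u_\nu=\lambda f$, and then $\eta_1=(n-1)c^2$, $II=cI$, $H_\phi=c$ pointwise. You rightly flag the remaining global step (parallel nonzero $\nabla u$ plus totally umbilic boundary $\Rightarrow$ Euclidean ball, $\phi$ constant, $m=n$) as the delicate one and defer it to the same references the paper cites, which is appropriate here. One small remark: the lemma as printed says ``$f$ is constant'' in the equality clause; this is evidently a typo for ``$\phi$ is constant'', as the application in the proof of Theorem~\ref{wentzell-theorem} confirms.
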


Now, we are in a position to give the proof of the theorems of this paper.
 
\begin{proof}[Proof of Theorem~\ref{clampledteo}]
Let $u$ be the first eigenfunction corresponding to the first eigenvalue $\Gamma_1$, that is, 
\begin{equation}\label{clampledplate2}
    \left \{ \begin{matrix}\Delta_\phi^2 u = \Gamma_1 u & \mbox{in} \quad M,  \\
    u =\frac{\partial u}{\partial \nu} = 0 & \mbox{on} \quad \partial M. \end{matrix} \right.
\end{equation}
Since $u=\frac{\partial u}{\partial \nu}=0$ on $\partial M$, substituting $u$ into Reilly's formula \eqref{reillyequality}, we get \begin{equation}\label{reilly53}
    \int_M(\Delta_\phi u)^2 d\mu=\int_M|\nabla^2u|^2d\mu + \int_M \rc_\phi(\nabla u, \nabla u)d\mu .
\end{equation}
By the definition of drifted Laplacian and from Schwarz inequality, using Inequality~\eqref{algebric-ineq}, we have 
\begin{equation}\label{shwarz54}
    |\nabla^2u|^2 \geq \frac{1}{n}\Delta u = \frac{1}{n}(\Delta_\phi u + \langle \nabla \phi, \nabla u \rangle)^2 \geq \frac{(\Delta_\phi u)^2}{n(a+1)} - \frac{|\nabla \phi|^2|\nabla u|^2}{na}.
\end{equation}
with equality holding if and only if $|\nabla^2u|^2 = \frac{1}{n}\Delta u$ and $\Delta u +\frac{1}{a}\langle \nabla \phi, \nabla u \rangle=0$.
From \eqref{clampledplate2} and divergence theorem, we obtain 
\begin{equation}\label{equation55}
    \int_M (\Delta_\phi u)^2d\mu = \int_M u\Delta_\phi^2ud\mu = \Gamma_1\int_M u^2 d\mu.
\end{equation}
Since $\rc_\phi \geq \frac{|\nabla \phi|^2}{na}+b$, from \eqref{reilly53} and \eqref{shwarz54}, we have 
\begin{equation}\label{eq56}
    \int_M(\Delta_\phi u)^2d\mu \geq \frac{1}{n(a+1)}\int_M(\Delta_\phi u)^2d\mu + b\int_M |\nabla u|^2d\mu.
\end{equation}
Moreover, since $u$ is not a zero function which vanishes on $\partial M$, we know that
\begin{equation}\label{eq57}
    \int_M(\Delta_\phi u)^2d\mu \geq \lambda_1 \int_M |\nabla u|^2d\mu \geq \lambda_1^2 \int_M u^2 d\mu,
\end{equation}
with equality holding if and only if $u$ is a first eigenfunction of the Dirichlet problem for drifted Laplacian of $M$. Thus, by \eqref{equation55}, \eqref{eq56} and \eqref{eq57} we conclude
\begin{equation*}
    \Gamma_1 \geq \lambda_1\Big(\frac{\lambda_1}{n(a+1)}+b\Big).
\end{equation*}
Suppose that $ \Gamma_1 = \lambda_1(\frac{\lambda_1}{n(a+1)}+b)$ is valid, then equality holds in \eqref{shwarz54} and we get that $\phi$ is not a constant and 
\begin{equation*}
    \Delta u + \frac{1}{a}\langle \nabla \phi , \nabla u \rangle=0,
\end{equation*}
 everywhere on $M$. Multiplying the previous inequality by $u$ and integrating on $M$ with respect to $e^{\frac{1}{a}\phi}dv$ we get
\begin{equation*}
    0=\int_M u(\Delta u + \frac{1}{a}\langle \nabla \phi , \nabla u \rangle)e^{\frac{1}{a}\phi}dv=-\int_M |\nabla u|^2 e^{\frac{1}{a}\phi}dv,
\end{equation*}
then $u$ is a constant function on $M$, which is a contradiction since $u$ is the first eigenfunction of Bi-drifted Laplacian and cannot be a constant. Therefore, we complete the proof of Item~\eqref{thm1-1}.

Now, let is use Inequality~\eqref{reillyinequality} to proof Item~\eqref{thm1-2}. In fact, substituting $u$ into \eqref{reillyinequality}, we have
\begin{align*}
    \int_M(\Delta_\phi u)^2d\mu &\geq \frac{1}{m}\int_M(\Delta_\phi u)^2d\mu + \int_M \rc_\phi^m(\nabla u, \nabla u)d\mu \nonumber \\
    &\geq \frac{1}{m}\int_M(\Delta_\phi u)^2d\mu + (m-1)k\int_M|\nabla u|^2d\mu.
\end{align*}
Thus, by \eqref{equation55}, \eqref{eq57}  and the previous inequality  we conclude that 
\begin{equation*}
    \Gamma_1 \geq \lambda_1 \Big(\frac{\lambda_1}{m}+(m-1)k\Big). 
\end{equation*}
When $m=n$, then $\phi$ is constant and $\Delta_\phi = \Delta$, by arguments similar to the final part of the proof of \cite[Theorem~1.1]{wangxia4}, we have $\Gamma_1 > \lambda_1(\frac{\lambda_1}{m}+(m-1)k) $.\\
When $m>n$, if $\Gamma_1 = \lambda_1(\frac{\lambda_1}{m}+(m-1)k)$, we know that the equality holds in \eqref{reillyinequality}, which means by Remark~\ref{remar2.1}
\begin{equation*}
    0=\Delta_\phi u + \frac{m}{m-n}\langle \nabla \phi, \nabla u \rangle = \Delta u + \frac{n}{m-n}\langle \nabla \phi, \nabla u \rangle,
\end{equation*}
holds everywhere on $M$. Multiplying the previous equation by $u$ and integrating on $M$ with respect to $e^{\frac{n}{m-n}\phi}dv$ we have
\begin{equation*}
    0=\int_M u(\Delta u + \frac{n}{m-n}\langle \nabla \phi , \nabla u \rangle)e^{\frac{n}{m-n}\phi}dv=-\int_M |\nabla u|^2 e^{\frac{n}{m-n}\phi}dv.
\end{equation*}
From previous equality, we know that $u$ is a constant function on $M$, which is a contradiction since $u$ is the first eigenfunction of drifted Laplacian and cannot be a constant. Therefore, we finish the proof of Item~\eqref{thm1-2}. Thus we complete the proof of Theorem~\ref{clampledteo}.
\end{proof}

\begin{proof}[Proof of Theorem~\ref{bucklingteo}]
 Let $g$ be the eigenfunction corresponding to the first eigenvalue $\Lambda_1$ of Problem~\eqref{buckling}, that is,
\begin{equation}\label{buckling2}
    \left \{ \begin{matrix}\Delta_\phi^2 g = -\Lambda_1 \Delta_\phi g & \mbox{in}\quad M,  \\
    g =\frac{\partial g}{\partial \nu} = 0 & \mbox{on} \quad \partial M. \end{matrix} \right.
\end{equation}
Using divergence theorem and the first equality in \eqref{buckling2} we have
\begin{equation}\label{Lambda1}
    \int_M(\Delta_\phi g)^2 d\mu = - \int_M g \Delta_\phi g d\mu = \Lambda_1 \int_M |\nabla g|^2 d\mu. 
\end{equation}
From Reilly's formula \eqref{reillyequality} and $\rc_\phi\geq \frac{|\nabla \phi|^2}{na}+b$, we obtain
\begin{align}\label{reilly65}
    \int_M(\Delta_\phi g)^2d\mu&=\int_M|\nabla^2g|^2d\mu + \int_M \rc_\phi(\nabla g, \nabla g)d\mu \nonumber\\
    &\geq \int_M |\nabla^2g|^2 d\mu + \int_M (\frac{|\nabla \phi|^2}{na}+b)|\nabla g|^2 d\mu.
\end{align}
Similar to \eqref{shwarz54} we have
\begin{equation*}
    |\nabla^2g|^2 \geq \frac{(\Delta_\phi g)^2}{n(a+1)} - \frac{|\nabla \phi|^2|\nabla g|^2}{na}.
\end{equation*}
with equality holding if and only if $|\nabla^2g|^2 = \frac{1}{n}\Delta g$ and $\Delta g +\frac{1}{a}\langle \nabla \phi, \nabla g \rangle=0$.
From \eqref{reilly65} and the previous inequality we get
\begin{equation}\label{eqqq56}
    \int_M(\Delta_\phi g)^2d\mu \geq \frac{1}{n(a+1)}\int_M (\Delta_\phi g)^2 d\mu + b\int_M |\nabla g|^2 d\mu.
\end{equation}
On the other hand, since $g$ is not a zero function which vanishes on $\partial M$, we know that
\begin{equation}\label{eq68}
    \int_M(\Delta_\phi g)^2d\mu \geq \lambda_1 \int_M |\nabla g|^2d\mu,
\end{equation}
with equality holding if and only if $g$ is a first eigenfunction of the Dirichlet problem for drifted Laplacian of $M$. 
Thus, from \eqref{Lambda1}, \eqref{eqqq56} and \eqref{eq68} we conclude that
\begin{equation*}\label{eq69}
    \Lambda_1 \geq \frac{\lambda_1}{n(a+1)} + b.
\end{equation*}
Analogous to what was done in the proof of Theorem~\ref{clampledteo}, if we suppose that $\Lambda_1 = \frac{\lambda_1}{n(a+1)} + b$, then
\begin{equation*}
    0=\int_M g(\Delta g + \frac{1}{a}\langle \nabla \phi , \nabla g \rangle)e^{\frac{1}{a}\phi}dv=-\int_M |\nabla g|^2 e^{\frac{1}{a}\phi}dv.
\end{equation*}
which is a contradiction since $g$ is the first eigenfunction of \eqref{buckling2} and cannot be a constant. Therefore, we have $\Lambda_1 > \frac{\lambda_1}{n(a+1)} + b$ and complete the proof of Theorem~\ref{bucklingteo}.
\end{proof}

\begin{proof}[Proof of Theorem~\ref{zeta1estimateteo}]
As a consequence of Inequality~\eqref{eq-zeta1-xi1} we must show only that
\begin{equation*}
    \xi_1 > \frac{mc\eta_1 \mu_1}{(m-1)(\mu_1+mk)},
\end{equation*}
where $\xi_1$ is the first nonzero eigenvalue of Problem~\eqref{problemxi1}.
For this, let $f$ be the eigenfunction corresponding $\xi_1$ of Problem~\eqref{problemxi1}, that is, 
\begin{align}\label{eq61}
    \left\{ \begin{matrix}\Delta_\phi^2 f = 0 & \mbox{in} \quad M,  \\
    \frac{\partial f}{\partial \nu} = \frac{\partial (\Delta_\phi f)}{\partial \nu} + \xi_1 f = 0 & \mbox{on} \quad \partial M. \end{matrix} \right.
\end{align}
Let $z=f|_{\partial M}$, then $z\neq 0$ and from \eqref{eq61} we get 
\begin{equation}\label{eq62}
    \xi_1 = \frac{\int_M (\Delta_\phi f)^2d\mu}{\int_{\partial M}z^2 d\vartheta}.
\end{equation}
Using that $\rc_\phi^m(\nabla f, \nabla f) \geq -(m-1)k|\nabla f|^2$ and substituting $f$ into \eqref{reillyinequality}, we have 
\begin{align}\label{eq63}
    \frac{m-1}{m}\int_M(\Delta_\phi f)^2d\mu  &\geq  \int_M \rc_\phi^m (\nabla f, \nabla f)d\mu + \int_{\partial M}II(\overline{\nabla}z, \overline{\nabla}z) d\vartheta\nonumber \\
    &\geq -(m-1)k\int_M |\nabla f|^2d\mu + c\int_{\partial M} |\overline{\nabla}z|^2 d\vartheta.
\end{align}
Since $\frac{\partial f}{\partial \nu}|_{\partial M}=0$, we know that
\begin{equation}\label{equation2.24}
    \int_M(\Delta_\phi f)^2d\mu \geq \mu_1 \int_M |\nabla f|^2d\mu.
\end{equation}
From \eqref{eq61} we can see that $\int_{\partial M}z d\vartheta=0$. Indeed, from \eqref{eq61} we get
\begin{align*}
    \xi_1 \int_{\partial M}fe^{-\phi}dA &= \int_{\partial M} \frac{\partial}{\partial \nu}(f-\overline{\Delta}_\phi f)e^{-\phi}dA \nonumber\\
    &=\int_M \dv((\nabla f - \nabla \Delta_\phi f)e^{-\phi})dv \nonumber \\
    &=\int_M(\Delta_\phi f -  \Delta_\phi^2 f)e^{-\phi}dv\nonumber\\
    &=\int_M(\Delta_\phi f)e^{-\phi}dv = 0.
\end{align*}
So, we have from Poincaré's inequality that 
\begin{equation}\label{eq66}
    \int_{\partial M}|\overline{\nabla}z|^2 d\vartheta \geq \eta_1 \int_{\partial M}z^2d\vartheta. 
\end{equation}
From \eqref{eq63}-\eqref{eq66}, we get
\begin{align*}
    \frac{m-1}{m}\int_M(\Delta_\phi f)^2d\mu \geq -\frac{(m-1)k}{\mu_1}\int_M (\Delta_\phi f)^2d\mu + c\eta_1\int_{\partial M} z^2 d\vartheta, 
\end{align*}
therefore, from \eqref{eq62} and the previous inequality, we obtain 
\begin{equation*}
   \xi_1 \geq \frac{mc\eta_1\mu_1}{(m-1)(mk+\mu_1)} .
\end{equation*}
The equality in the previous inequality can not occur. In fact, otherwise, we must have a sign of equality in \eqref{reillyinequality} and consequently (see Remark~\ref{remar2.1})
\begin{equation*}
    \nabla^2f=\frac{\Delta f}{n}\langle, \rangle \quad \mbox{and} \quad \Delta_\phi f + \frac{m}{m-n}\langle \nabla \phi, \nabla f \rangle=0.
\end{equation*}
Thus, for a tangent vector field $X$ of $\partial M$, from the first equation above and of the fact $\frac{\partial f}{\partial \nu}|_{\partial M}=0$ we obtain 
\begin{equation*}
    0=\nabla^2f(\nu, X) = X\nu(f) - (\nabla_X\nu)f=-\langle \nabla_X \nu,\overline{\nabla} z\rangle.
\end{equation*}
In particular, taking $X=\overline{\nabla}z$ we have
\begin{equation*}
    II(\overline{\nabla}z, \overline{\nabla}z)=\langle \nabla_{\overline{\nabla}z}\nu, \overline{\nabla}z \rangle = 0.
\end{equation*}
This is impossible since $II=cI$ and $z$ is not constant. This completes the proof of Theorem~\ref{zeta1estimateteo}.
\end{proof}

\begin{proof}[Proof of Theorem~\ref{teorematau1}]
Let $f$ be an eigenfunction corresponding to $\tau_1$, that is:
\begin{equation}\label{bi-drifting2}
    \left \{ \begin{matrix}\Delta_\phi^2 f = 0 & \mbox{in} \quad M,  \\
    \frac{\partial f}{\partial \nu} = \frac{\partial (\Delta_\phi f)}{\partial \nu} - \tau_1 \overline{\Delta}_\phi f = 0 & \mbox{on} \quad \partial M. \end{matrix} \right.
\end{equation}
Set $z=f|_{\partial M}$, then $z\neq 0$ and $h=\partial_{\nu} f|_{\partial M}=0$. From \eqref{bi-drifting2}, we get
\begin{equation}\label{eq-tau1}
    \tau_1 =  \frac{\int_M (\Delta_\phi f)^2 d\mu}{\int_{\partial M}|\nabla z|^2 d\vartheta}. 
\end{equation}
Substituting $f$ into Reilly's formula \eqref{reillyequality}, we have
\begin{align}\label{eq26}
    \int_{M}\Big((\Delta_\phi f)^2 - |\nabla^2f|^2 \Big)d\mu &= \int_{M}\rc_{\phi}(\nabla f,\nabla f)d\mu + \int_{\partial M}II(\overline{\nabla}z, \overline{\nabla} z)d\vartheta  \nonumber\\
    &\geq \int_M \Big( \frac{|\nabla \phi|^2}{na} - b\Big) |\nabla f|^2 d\mu + c\int_{\partial M}|\overline{\nabla} z|^2d\vartheta.
\end{align}
Analogous to \eqref{shwarz54}, we can see that
\begin{equation}\label{eq29}
    |\nabla^2f|^2 \geq \frac{(\Delta_\phi f)^2}{n(a+1)} - \frac{|\nabla \phi|^2|\nabla f|^2}{na}.
\end{equation}
with equality holding if and only if $|\nabla^2f|^2 = \frac{1}{n}\Delta f$ and $\Delta f +\frac{1}{a}\langle \nabla \phi, \nabla f \rangle=0$.
From \eqref{eq26} and \eqref{eq29}, we get
\begin{equation}\label{eq30}
    \Big(1- \frac{1}{n(a+1)}\Big)\int_M(\Delta_\phi f)^2d\mu \geq -b\int_M |\nabla f|^2d\mu + c\int_{\partial M} |\overline{\nabla}z|^2ed\vartheta.
\end{equation}
Since $\frac{\partial f}{\partial \nu}|_{\partial M}=0$, we have 
\begin{equation}\label{neumannmu1}
    \int_M (\Delta_\phi f)^2d\mu \geq \mu_1 \int_M |\nabla f|^2 d\mu.
\end{equation}
It follows from \eqref{eq30} and \eqref{neumannmu1} that
\begin{equation}\label{eq97}
 \Big(1- \frac{1}{n(a+1)}+ \frac{b}{\mu_1}\Big)\int_M(\Delta_\phi f)^2d\mu \geq c\int_{\partial M} |\overline{\nabla} z|^2d\vartheta.
\end{equation}
Thus, from \eqref{eq-tau1} and \eqref{eq97} we conclude 
\begin{equation}\label{zeta2}
    \tau_1 \geq \frac{nc(a+1)\mu_1}{n(a+1)(\mu_1+b)-\mu_1}.
\end{equation}
Let us show by contradiction that the equality in \eqref{zeta2} can not occur. In fact, if \eqref{zeta2} take equality sign, then we must have  equality sign in \eqref{eq29} that implies
\begin{equation*}
    \nabla^2f = \frac{\Delta f}{n} \langle , \rangle.
\end{equation*}
Thus for a tangent vector field $X$ of $\partial M$, we have from the previous equality and $\frac{\partial f}{\partial \nu}|_{\partial M}=0$ that 
\begin{equation*}
    0=\nabla^2f(\nu, X) = X\nu(f) - (\nabla_X \nu)(f) = - \langle \nabla_X \nu, \overline{\nabla} z \rangle. 
\end{equation*}
In particular, for $X=\overline{\nabla}z$, we have $II(\overline{\nabla}z, \overline{\nabla}z)=0$. 
This is impossible since $II=cI$ and $z$ is not constant. This finish the proof of Item~\eqref{thm4-1}.

Now, we can use Inequality~\eqref{reillyinequality} to proof Item~\eqref{thm4-2}. In fact, substituting $f$ into \eqref{reillyinequality} and since $\rc_\phi^m \geq -(m-1)k$, we have
\begin{align}\label{eqq44}
    \frac{m-1}{m}\int_M(\Delta_\phi f)^2d\mu &\geq \int_M \rc_\phi^m(\nabla f, \nabla f)d\mu + \int_{\partial M}II(\overline{\nabla}z,\overline{\nabla}z)d\vartheta\nonumber\\
    &\geq-(m-1)k\int_M|\nabla f|^2d\mu +c\int_{\partial M}|\overline{\nabla}z|^2d\vartheta.
\end{align}
From \eqref{neumannmu1} and \eqref{eqq44}  we obtain
\begin{equation*}
    \Big( \frac{m-1}{m} + \frac{(m-1)k}{\mu_1} \Big)\int_M (\Delta_\phi f)^2d\mu \geq c \int_{\partial M}|\overline{\nabla}z|^2d\vartheta.
\end{equation*}
Then, from \eqref{eq-tau1} and the previous inequality, we have
\begin{equation*}
    \tau_1 \geq \frac{mc\mu_1}{(m-1)(mk+\mu_1)}.
\end{equation*}
For an argument similar to the end of proof of Theorem~\ref{zeta1estimateteo} and to the end of proof of Item~\eqref{thm4-1} we obtain $\tau_1 > \frac{mc\mu_1}{(m-1)(mk+\mu_1)}$ and we conclude the proof of Item~\eqref{thm4-2}. This completes the proof of Theorem~\ref{teorematau1}.
\end{proof}

\begin{proof}[Proof of Theorem~\ref{volumproblemteo}] Let $f$ be the solution of the following Laplace equation
\begin{equation*}
    \left \{ \begin{matrix}\Delta_\phi f = 1 & \mbox{in} \quad M,  \\
    f = 0 & \mbox{on} \quad \partial M. \end{matrix} \right.
\end{equation*}
It follows from Rayleigh-Ritz characterization of $p_1$ that 
\begin{equation}\label{eq2.33}
    p_1 \leq \frac{\int_M (\Delta_\phi f)^2e^{-\phi}dv}{\int_{\partial M} \eta^2e^{-\phi}dA}= \frac{\int_M e^{-\phi}dv}{\int_{\partial M} \eta^2e^{-\phi}dA} = \frac{V_\phi}{\int_{\partial M}\eta^2e^{-\phi}dA},
\end{equation}
where $\eta = \frac{\partial f}{\partial \nu}|_{\partial M}$. Integrating $\Delta_\phi f=1$ on $M$ and using the divergence theorem it gives 
\begin{equation*}
    V_\phi=\int_M\Delta_\phi f e^{-\phi}dv = \int_M \dv(e^{-\phi}\nabla f)dv = \int_{\partial M} \eta e^{-\phi}dA.
\end{equation*}
Hence we infer from Schwarz's inequality that 
\begin{equation*}\label{eq105}
    V_{\phi}^2=\Big(\int_{\partial M} \eta e^{-\phi}dA\Big)^2\leq \int_{\partial M}(\eta e^{-\frac{\phi}{2}})^2dA \int_{\partial M}(e^{-\frac{\phi}{2}})^2dA,
\end{equation*}
hence, we obtain
\begin{equation}\label{eqq105}
     V_{\phi}^2\leq A_\phi\int_{\partial M} \eta^2 e^{-\phi}dA.
\end{equation}
Thus, from \eqref{eq2.33} and \eqref{eqq105}, we have 
\begin{equation*}
    p_1 \leq \frac{V_\phi}{\int_{\partial M}e^{-\phi}dA} \leq \frac{A_\phi}{V_\phi}.
\end{equation*}
Now, assume that $\rc_\phi \geq \frac{|\nabla \phi|^2}{na}$, $H_\phi \geq \frac{n(a+1)-1}{n(a+1)(n-1)}\frac{A_\phi}{V_\phi}$ for some $x_0 \in \partial M$ and $p_1=\frac{A_\phi}{V_\phi}$. In this case, the equality must hold in \eqref{eqq105} and so $\eta = \frac{V_\phi}{A_\phi}$ is a constant. Consider the function $\psi$ on $M$ given by 
\begin{equation}\label{eq106}
    \psi = \frac{1}{2}|\nabla f|^2 - \frac{f}{n(a+1)}.
\end{equation}
Using the Bochner formula for the drifted Laplacian (see \cite{SchoenYau}), that is, 
\begin{equation*}
    \frac{1}{2}\Delta_\phi (|\nabla f|^2) = |\nabla ^2 f|^2 + \langle \nabla f, \nabla (\Delta_\phi f) \rangle + \rc_\phi(\nabla f, \nabla f),
\end{equation*}
and \eqref{eq106}, since $\rc_\phi(\nabla f, \nabla f) \geq \frac{|\nabla \phi|^2|\nabla f|^2}{na}$, we have 
\begin{align}\label{eq108}
    \Delta_\phi \psi &= \frac{1}{2}\Delta_\phi(|\nabla f|^2) - \frac{\Delta_\phi f}{n(a+1)}\nonumber\\
            &\geq    |\nabla ^2 f|^2 + \langle \nabla f, \nabla (\Delta_\phi f) \rangle + \frac{|\nabla \phi|^2|\nabla f|^2}{na} - \frac{1}{n(a+1)}.
\end{align}
From Schwarz's inequality and using Inequality~\eqref{algebric-ineq}, we get 
\begin{equation}\label{eq109}
|\nabla^2 f|^2 \geq \frac{1}{n}(\Delta f)^2 \geq \frac{(\Delta_\phi f)^2}{n(a+1)} - \frac{|\nabla \phi|^2 |\nabla f|^2}{na}.    
\end{equation}
From \eqref{eq108} and \eqref{eq109} we have $\Delta_\phi \psi \geq 0$. Moreover, since $\eta=\frac{V_\phi}{A_\phi}$, we can see that $\psi = \frac{1}{2}(\frac{V_\phi}{A_\phi})^2$ on the boundary. So, we conclude from the strong maximum principle and Hopf Lemma (see \cite{GilbargTrudinger}, pp. 34-35) that either 
\begin{equation*}
    \psi = \frac{1}{2}\Big(\frac{V_\phi}{A_\phi}\Big)^2 \quad \mbox{in} \quad M,
\end{equation*}
or
\begin{equation}\label{eq111}
    \frac{\partial \psi}{\partial \nu}(y) > 0, \quad \forall y \in \partial M.
\end{equation}
Since $f|_{\partial M}=0$ and $\Delta_\phi f=1$, from Lemma~\ref{lemma} we obtain
\begin{equation*}
    1=(\Delta_\phi f)|_{\partial M} = (n-1)H_\phi \eta + \nabla^2f(\nu, \nu).
\end{equation*}
Hence, it follows on $\partial M$ that
\begin{align*}
    \frac{\partial \psi}{\partial \nu} &= \eta \nabla^2 f(\nu, \nu) - \frac{\eta}{n(a+1)}\nonumber \\
    &=\eta(1-(n-1)H_\phi \eta) - \frac{\eta}{n(a+1)}\nonumber\\
    &=\big( \frac{n(a+1)-1}{n(a+1)} - (n-1)H_\phi \frac{V_\phi}{A_\phi} \big)\frac{V_\phi}{A_\phi},
\end{align*}
which shows that \eqref{eq111} is not true since $H_\phi(x_0)\geq \frac{n(a+1)-1}{n(n-1)(a+1)}\frac{A_\phi}{V_\phi}$. Therefore $\psi$ is constant on $M$. Since $\Delta_\phi \psi =0$, that is, the equations \eqref{eq108} and \eqref{eq109} hold and so
\begin{equation*}
\Delta f
+\frac{1}{a}\langle \nabla \phi, \nabla f \rangle = 0    .
\end{equation*}
Multiplying the equality above with $f$ and integrating on $M$ with respect to $e^{\frac{1}{a}\phi}dv$ we get
\begin{equation*}
    0=\int_Mf(\Delta f +\frac{1}{a}\langle \nabla \phi, \nabla f \rangle)e^{\frac{1}{a}\phi}dv = -\int_M |\nabla u|^2e^{\frac{1}{a}\phi}dv.
\end{equation*}
Therefore, we have that $f$ is a constant function on $M$, which is a contradiction since $\Delta_\phi f =1$. Thus, we conclude that the inequalities in \eqref{eq108} and \eqref{eq109} hold only when $\phi=constant$ and $\rc_\phi = \rc$. Then by Wang and Xia's argument in \cite[Theorem~1.3]{wangxia1}, we complete the proof of Theorem~\ref{volumproblemteo}.
\end{proof}

\begin{proof}[Proof of Theorem~\ref{wentzell-theorem}]
Let us consider $u$ be the solution to the following problem
\begin{equation*}
    \left \{ \begin{array}{ll}\Delta_\phi u = 0 & \mbox{in } M,  \\
    u = z & \mbox{on } \partial M, \end{array} \right.
\end{equation*}
where $z$ is the eigenfunction of the first nonzero closed eigenvalue $\eta_1$ of the drifted Laplacian on $\partial M$, that is, $\overline{\Delta}_\phi z + \eta_1 z=0$ on $\partial M$. Set $h=\frac{\partial u}{\partial \nu}$, from \eqref{gamma1} and the fact $\int_{\partial M}ud\mu = \int_{\partial M}zd\mu=0$, we have
\begin{align}\label{equation-2.43}
\gamma_{1, \phi} \leq \frac{\int_M|\nabla u|^2d\vartheta+\rho\int_{\partial M}|\overline{\nabla}z|^2 d\mu}{\int_{\partial M} z^2 d\mu} &= \frac{\int_{\partial M} zh d\mu+\rho\int_{\partial M}|\overline{\nabla}z|^2 d\mu}{\int_{\partial M} z^2 d\mu}\nonumber\\
&=\rho \eta_1 + \frac{\int_{\partial M} zh d\mu}{\int_{\partial M} u^2 d\mu}.
\end{align}
By hypothesis, $\rc_\phi^m(\nabla u, \nabla u)\geq -k|\nabla u|^2$, $II(\overline{\nabla}z, \overline{\nabla}z)\geq c |\overline{\nabla}z|^2$, $H_\phi\geq c$ and using Inequality \eqref{reillyinequality}, we get 
\begin{align*}
    k\int_{\partial M}zhd\mu &= k\int_M|\nabla u|^2d\vartheta \geq \int_M [\frac{m-1}{m}(\nabla_\phi u)^2-\rc_\phi^m(\nabla u, \nabla u)]d\vartheta\\
    &\geq \int_{\partial M}\Big[(n-1)H_\phi h^2 + 2(\overline{\Delta}_\phi z)h+II(\overline{\nabla}z, \overline{\nabla}z)\Big]d\mu\\
    &\geq (n-1)c\int_{\partial M}h^2d\mu-2\eta_1\int_{\partial M}zhd\mu+c\int_{\partial M}|\overline{\nabla}z|^2d\mu,
\end{align*}
then,
\begin{equation}\label{equation-2.44}
    0\geq (n-1)c\int_{\partial M}h^2d\mu-(2\eta_1+k)\int_{\partial M}zhd\mu+c\eta_1\int_{\partial M}z^2d\mu .
\end{equation}
Thus, we have
\begin{align*}
    0&\geq (n-1)c\int_{\partial M}\Big(h-\frac{2\eta_1+k}{2(n-1)c}z\Big)^2d\mu + \Big(c\eta_1 - \frac{(2\eta_1+k)^2}{4(n-1)c}\Big)\int_{\partial M}z^2d\mu\\
    &\geq \Big(c\eta_1 - \frac{(2\eta_1+k)^2}{4(n-1)c}\Big)\int_{\partial M}z^2d\mu,
\end{align*}
and so 
\begin{equation*}
    (2\eta_1+k)^2\geq 4(n-1)\eta_1c^2.
\end{equation*}
We can also get from \eqref{equation-2.44} 
\begin{equation*}
    0\geq (n-1)c\int_{\partial M}h^2d\mu-(2\eta_1+k)\Big(\int_{\partial M}h^2d\mu\Big)^{\frac{1}{2}}\Big(\int_{\partial M}z^2d\mu\Big)^{\frac{1}{2}}+c\eta_1\int_{\partial M}z^2d\mu ,
\end{equation*}
which, implies that
\begin{equation*}
    \Big(\int_{\partial M}h^2d\mu\Big)^{\frac{1}{2}} \leq \frac{2\eta_1+k+\sqrt{(2\eta_1+k)^2-4(n-1)\eta_1c^2}}{2(n-1)c}\Big(\int_{\partial M}z^2d\mu\Big)^{\frac{1}{2}}.
\end{equation*}
Combining \eqref{equation-2.43} and the previous inequality, we obtain \eqref{gamma1estimate}. If equality holds in \eqref{gamma1estimate}, then all inequalities become equalities, and through the above argument, we have $\nabla^2 u=0, H_\phi=c$ and
\begin{equation*}
    h=\frac{2\eta_1+k+\sqrt{(2\eta_1+k)^2-4(n-1)\eta_1c^2}}{2(n-1)c}z.
\end{equation*}
Taking a local orthonormal fields $\{e_i\}_{i=1}^{n-1}$ tangent to $\partial M$. We infer using Lemma~\ref{lemma} and the previous equalities that
\begin{align*}
    0=&\sum_{i=1}^{n-1}\nabla^2u(e_i, e_i)= \overline{\Delta}_\phi z+(n-1)H_\phi h \nonumber \\
    =&-\eta_1 z + (n-1)c\cdot \frac{2\eta_1+k+\sqrt{(2\eta_1+k)^2-4(n-1)\eta_1c^2}}{2(n-1)c}z,
\end{align*}
which implies $k=0$ and $\eta_1=(n-1)c^2$. Then, under the assumptions for $\rc_\phi^m$, $II$ and $H_\phi$ by Lemma~\ref{lemma2.1} we know that $M$ is isometric to an Euclidean ball of radius $\frac{1}{c}$, $\phi$ is constant and $m=n$. On the other hand, we know that for the $n$-dimensional Euclidean ball of radius $\frac{1}{c}$, the equality in \eqref{gamma1estimate} holds when $\phi$ is constant and $m=n$(see \cite[Eq. (1.7)]{wangxia3}). This completes the proof of Theorem~\ref{wentzell-theorem}.
\end{proof}

\begin{proof}[Proof of Theorem~\ref{steklov-theorem}]
Let $u$ be an eigenfunction corresponding to the first eigenvalue $q_1$ of Problem~\eqref{steklov-problem}, that is,
\begin{equation*}
    \left \{ \begin{array}{ll}\Delta_\phi u = 0 & \mbox{in } M,  \\
    \frac{\partial u}{\partial \nu} = q_1 u & \mbox{on } \partial M. \end{array} \right.
\end{equation*}
Setting $w=u|_{\partial M}$, $y=\frac{\partial u}{\partial \nu}|_{\partial M}$.  Since 
\begin{equation*}
    \rc_\phi^m(\nabla u, \nabla u)\geq -k|\nabla u|^2, \quad II(\overline{\nabla}w,\overline{\nabla}w)\geq c|\overline{\nabla}w|^2, \quad \mbox{and}\quad H_\phi\geq c,
\end{equation*}
substituting $u$ into Inequality~\eqref{reillyinequality}, we obtain
\begin{align*}
    k\int_M |\nabla u|^2 d\vartheta &\geq \int_{\partial M}[(n-1)H_\phi y^2+2(\overline{\Delta}_\phi w)y + II(\overline{\nabla}w, \overline{\nabla}w)]d\mu\nonumber\\
    &\geq (n-1)c\int_{\partial M}y^2 d\mu + 2 \int_{\partial M}(\overline{\Delta}_\phi w)yd\mu + c\int_{\partial M}|\overline{\nabla} w|^2 d\mu . 
\end{align*}
By divergence theorem $\int_{\partial M}y \overline{\Delta}_\phi w d\mu = - \int_{\partial M} \langle \overline{\nabla}y, \overline{\nabla}w \rangle d\mu$, then
\begin{align}\label{equation2.44}
    k\int_M |\nabla u|^2 d\vartheta > - 2\int_{\partial M} \langle \overline{\nabla}y, \overline{\nabla}w \rangle d\mu + c\int_{\partial M}|\overline{\nabla} w|^2 d\mu=(-2q_1+c)\int_{\partial M}|\overline{\nabla} w|^2 d\mu . 
\end{align}
Since $\int_{\partial M} w d\mu = 0$ and $w\neq0$, we know that
\begin{equation*}
    \int_{\partial M} w^2d\mu \leq \frac{1}{\eta_1}\int_{\partial M}|\overline{\nabla} w|^2 d\mu.
\end{equation*}
Therefore, from divergence theorem and the previous inequality we get
\begin{equation*}
    \int_M |\nabla u|^2d\vartheta = \int_M \dv (u\nabla u) d\vartheta = \int_{\partial M}wyd\mu = q_1\int_{\partial M}w^2 d\mu \leq \frac{q_1}{\eta_1}\int_{\partial M}|\overline{\nabla} w|^2 d\mu .
\end{equation*}
From the previous inequality and Inequality~\eqref{equation2.44} we obtain 
\begin{equation*}
   q_1 > \frac{c\eta_1}{2\eta_1+k}, 
\end{equation*}
and completes the proof of Theorem\eqref{steklov-theorem}.
\end{proof}

\section*{Acknowledgements} 
The author would like to express their sincere thanks to Xia Changyu and the referee for useful comments, discussions and constant encouragement. This study was financed in part by Coordenação de Aperfeiçoamento de Pessoal de Nível Superior (CAPES) in conjunction with Fundação Rondônia de Amparo ao Desenvolvimento~das Ações Científicas e Tecnológicas e à Pesquisa do Estado de Rondônia (FAPERO) - Finance Code 001.


\begin{thebibliography}{Referencial}
\bibitem{ashb2} M. S. Ashbaugh, R. Benguria, R. S. Laugesen, Inequalities for the first eigenvalues of the clamped plate and buckling problems, General Inequalities 7 (1997), 95-110.
\bibitem{BatistaSantos}
M. Batista, J.I. Santos, The first Stekloff eigenvalue in weighted Riemannian manifolds. available online at arXiv:1504.02630v1.
\bibitem{ashb3} 
M. S. Ashbaugh, D. Bucur, On the isoperimetric inequality for the buckling of a clamped plate. Special issue dedicated to Lawrence E. Payne. Z. Angew. Math. Phys. 54 (2003), 756-770.
\bibitem{ashb4} 
M. S. Ashbaugh, R. S. Laugesen, Fundamental tones and buckling loads of clamped plates, Ann. Scuola Norm. Sup. Pisa Cl. Sci. 23 (1996), 383-402.
\bibitem{BakryEmery}
D. Bakry, M. Emery, Diffusion hypercontractives. Sém Prob XIX Lect Notes in Math 1123 (1985) 177–206.
\bibitem{BarrosGomes}
 A.A. Barros,  J.N.V. Gomes, Triviality of compact m-quasi-einstein manifolds. Results Math. 71(1–2) (2017) 241–250.
\bibitem{BarrosRibeiro}
A.A. Barros, E. Ribeiro Jr., Characterizations and integral formulas for generalized quasi-Einstein metrics, Bull. Braz. Math. Soc. (N.S.) 45 (2014) 325–341.
\bibitem{BezerraXia}
A. C. Bezerra, C. Xia,  Sharp lower bounds for the first eigenvalues of the bi-drifting Laplacian, Differential Geometry and its Applications 68 (2020) 101572.
\bibitem{Cao}
 H.-D. Cao, Recent progress on Ricci solitons. In: Recent Advances in Geometric Analysis. Adv. Lect. Math., vol. 11, pp. 1-38. International Press, Somerville (2010).
\bibitem{ccwx}
D. Chen,  Q. M. Cheng,  Q. Wang and C. Xia, On eigenvalues of a system of elliptic equations and of them biharmonic operator, J. Math. Anal. Appl., 387 (2012) 1146-1159.
\bibitem{DuBezerra}
F. Du, A. C. Bezerra, Estimates for eigenvalues of a system of elliptic equations with drift and of bi-drifting laplacian, Comm. Pure Appl. Anal., 6(2) (2017) 475- 491.
\bibitem{Escobar}
J.F. Escobar, The geometry of the first non-zero Stekloff eigenvalue. J. Funct. Anal. 150 (1997) 544–556.
\bibitem{GilbargTrudinger}
D. Gilbarg, N. Trudinger, Elliptic Partial Differential Equations of Second Order, Reprint of the 1998 edition, Classics Math., Springer-Verlag, Berlin, 2001.
\bibitem{HuangMa} 
G. Huang, B. Ma. Sharp bounds for the first nonzero Steklov eigenvalues for $f$-Laplacians. Turk. J. Math. 40 (2016) 770 - 783.
\bibitem{HuangRuan}
Q. Huang, Q. Ruan, Application of Some Elliptic Equations in Riemannian Manifolds, J. Math. Anal. Appl. 409 (2014), no. 1, 189 - 196.
\bibitem{kttlr3}
J.R. Kuttler, V.G. Sigillito, Inequalities for membrane and Stekloff eigenvalues, J. Math. Anal. Appl. 23 (1968) 148–160.
\bibitem{LiWei}
H.Z. Li,  Y. Wei, f -minimal surface and manifold with positive m-Bakry-Émery Ricci curvature. J. Geom. Anal. 25 (2015) 421–435.
\bibitem{MaDu}
L. Ma, S. H. Du, Extension of Reilly formula with applications to eigenvalue estimates for drifting Laplacians, C. R. Math. Acad. Sci. Paris., 348 (2010) 1203-1206.
\bibitem{Qian}
Z. Qian, Estimates for weighted volumes and applications. Q. J. Math. 48(2)  (1997) 235–242.
\bibitem{reilly}
R. Reilly, Applications of the Hessian operator in a Riemannian manifold, Indiana Univ. Math. J. 26 (1977) 459–472.
\bibitem{Stekloff} 
M.W. Stekloff, Sur les problèmes fondamentaux de la physique mathématique. (suite et fin). Ann. Sci. École Norm. Sup. (3), 19:455–490, 1902.
\bibitem{wangxia1}
Q. Wang, C. Xia, Sharp bounds for the first non-zero Stekloff eigenvalues, J. Funct. Anal. 257 (2009) 2635–2644.
\bibitem{wangxia3}
 C. Xia, Q. Wang, Eigenvalues of the Wentzell–Laplace operator and of the fourth order Steklov problems, J. Differential Equations 264 (2018) 6486–6506.
\bibitem{wangxia4}
Q. Wang, C. Xia, Sharp Lower Bounds for the First Eigenvalues of the Bi-Laplace Operator. arXiv:1802.05502v5,2020.
\bibitem{WeiWylie}
 G.-F. Wei, W. Wylie, Comparison geometry for the Bakry-Émery Ricci tensor. J. Differ. Geom. 83 (2009) 377–405.
\bibitem{SchoenYau}
R. Schoen, S.T. Yau, Lectures on Differential Geometry, International Press, 1994.
\bibitem{TuHuang}
Q. Tu, G. Huang, Boundary effect of $m$-dimensional Bakry-Émery Ricci curvature. Anal.Math.Phys. 9 (2019) 1319–1331.
\bibitem{Zhao}
Y. Zhao, C. Wu, J. Mao, F. Du, Eigenvalue comparisons in Steklov eigenvalue problem and some other eigenvalue estimates, Rev. Mat. Complut. (2020) 33:389–414.
\end{thebibliography}
\end{document}